\documentclass[12pt,onecolumn,draftcls]{IEEEtran}
\usepackage{subcaption}
\usepackage{amsmath}
\usepackage{amsbsy}
\usepackage{txfonts}
\usepackage{stmaryrd}
\usepackage{enumerate}
\usepackage{graphicx}
\usepackage{color}
\newtheorem{theorem}{Theorem}[section]

\newtheorem{Lemma}[theorem]{Lemma}
\newtheorem{rmk}{Remark}
\newtheorem{proof}[theorem]{Proof}

\newenvironment{Proof}[1][Proof]{\begin{trivlist}
\item[\hskip \labelsep {\bfseries #1}]}{\end{trivlist}}
\newtheorem{proposition}[theorem]{Proposition}

\title{Control of Homodirectional and General Heterodirectional Linear Coupled Hyperbolic PDEs}

\author{Long Hu\thanks{L. Hu is with Sorbonne Universit\'{e}s, UPMC Univ Paris 06, UMR 7598, Laboratoire Jacques-Louis Lions, F-75005, Paris, France. School of Mathematical
Sciences, Fudan University, Shanghai 200433, China. E-mail: \texttt{hu@ann.jussieu.fr},\ \ \texttt{hul10@fudan.edu.cn}. This author
was supported by the China Scholarship Council for Ph.D. study at UPMC (No. 201306100081) and was partially supported by ERC advanced grant 266907
(CPDENL) of the 7th Research Framework
Programme (FP7). }, 
	Florent Di Meglio\thanks{F. Di Meglio is with MINES ParisTech, PSL Research University, CAS - Centre automatique et syst\`emes, 60 bd St Michel, 75006 Paris, France.
        {\texttt{florent.di\_meglio{\@}mines-paristech.fr}}}, 
        Rafael Vazquez\thanks{Rafael Vazquez is with the Department of Aerospace Engineering, Universidad de Sevilla, Camino de los Descubrimiento s.n., 41092 Sevilla, Spain.
        {\texttt{rvazquez1@us.es}}}, 
        Miroslav Krstic\thanks{M. Krstic is with the Department of Mechanical and Aerospace Engineering, University of California San Diego, La Jolla, CA 92093-0411, USA.
        {\texttt{krstic@ucsd.edu}}}}
\begin{document}
\maketitle
%\begin{keyword}
%%% keywords here, in the form: keyword \sep keyword
%hyperbolic PDEs \sep boundary stabilization \sep backstepping  
%%% MSC codes here, in the form: \MSC code \sep code
%%% or \MSC[2008] code \sep code (2000 is the default)
%
%\end{keyword}
\begin{abstract}
%% Text of abstract
Research on stabilization of coupled hyperbolic PDEs has been dominated by the focus on pairs of counter-convecting (``heterodirectional'') transport PDEs with distributed local coupling and with controls at one or both boundaries. A recent extension allows stabilization using only one control for a system containing an arbitrary number of coupled transport PDEs that convect at different speeds against the direction of the PDE whose boundary is actuated. In this paper we present a solution to the fully general case, in which the number of PDEs in either direction is arbitrary, and where actuation is applied on only one boundary (to all the PDEs that convect downstream from that boundary). To solve this general problem, we solve, as a special case, the problem of control of coupled ``homodirectional'' hyperbolic linear PDEs, where multiple transport PDEs convect in the same direction with arbitrary local coupling. Our approach is  based on PDE backstepping  and yields solutions to stabilization, by both full-state and observer-based output feedback, trajectory planning, and trajectory tracking problems. 
\end{abstract}
\section{Introduction}

\paragraph{Background} Coupled first-order linear hyperbolic systems, typically formulated on a  1-D spatial domain normalized to the interval $(0,1)$, are common in modeling of traffic flow~\cite{Amin2008}, heat exchangers~\cite{Xu2002}, open channel flow~\cite{Coron1999,Halleux2003} or multiphase flow~\cite{DiMeglio2011,Djordjevic2010,Dudret2012}. 

Research on stabilization of such PDEs has been dominated by the focus on pairs of counter-convecting transport PDEs with distributed local coupling. In~\cite{Coron2013} a first solution allowing actuation on only one boundary and permitting coupling coefficients of arbitrary size was presented. A recent extension~\cite{DiMeglio2013} by three of the authors of the present paper allows stabilization using only one control for a system containing an arbitrary number of coupled transport PDEs that convect at different speeds against the direction of the PDE whose boundary is actuated. 

In this paper we present a solution to the fully general case of coupled hyperbolic PDEs. We divide such PDE systems into two categories:
\begin{itemize}
	\item \emph{homodirectional} systems of $m$ transport PDEs, for which all the $m$ transport velocities have the same signs, i.e., all of the PDEs convect in the same direction. Because of the finite length of the spatial domain, these are inherently stable but the coupling between states can cause undesirable transient behaviors and the trajectory planning problem is non-trivial. 
	\item \emph{heterodirectional} systems of $n+m$ transport PDEs, for which there exist at least two transport velocities with opposite signs, i.e., where $m$ PDEs convect in one direction and $n$ PDEs convect in the opposite direction. The coupling between states traveling in opposite directions may cause instability. 
\end{itemize}

In this paper we present control designs for the fully general case of coupled heterodirectional hyperbolic PDEs, allowing the numbers $m$ and $n$ of PDEs in either direction to be arbitrary, and with actuation applied on only one boundary (to all the $m$ PDEs that convect downstream from that boundary). To solve this general problem, we solve, as a special case, the heretofore unsolved problem of control of coupled homodirectional hyperbolic linear PDEs, where multiple transport PDEs convect in the same direction, have possibly distinct speeds, and arbitrary local coupling. 

Our approach is  based on PDE backstepping and yields solutions to stabilization, by both full-state and observer-based output feedback, trajectory planning, and trajectory tracking problems.

\paragraph{Literature} 
Controllability of hyperbolic systems has first been investigated using explicit computation of the solution along the characteristic curves in the framework of $C^1$ norm~\cite{Greenberg1984,Li1994,Qin1985}. Later, the so-called Control Lyapunov Functions methods emerged, enabling the design of dissipative boundary conditions for nonlinear hyperbolic systems in the context of both $C^1$ norm and $H^2$ norm~\cite{Coron2007,Coron2008,Coron2014}. 
%Recently, it has been shown~\cite{Coron2014a} that the exponential stability strongly depends on the considered norm, i.e. a previous known sufficient condition for exponential stability with respect to the $H^2$ norm is not sufficient in the framework of $C^1$ norm.
Further, using Lyapunov functions method, sufficient boundary conditions for the exponential stability of linear~\cite{Diagne2012} or nonlinear~\cite{Gugat2011,Gugat2011a} hyperbolic systems of balance laws have been derived.  All of these results impose restrictions on the magnitude of the coupling coefficients, which are responsible for potential instabilities. 

In~\cite{Coron2013}, a full-state feedback control law, with actuation only on one end of the domain, which achieves $H^2$ exponential stability of closed-loop 2--state heterodirectional linear and quasilinear hyperbolic systems is derived using a backstepping method. With a similar backstepping transformation, an output-feedback controller is designed in~\cite{DiMeglio2013} for heterodirectional systems with $m=1$ (controlled) negative velocity and $n$ (arbitrary) positive ones. These results hold regardless of the (bounded) magnitude of the coupling coefficients. Unfortunately, the method presented  in \cite{Coron2013,DiMeglio2013} can not be extended to the case $m>1$.

\paragraph{Contribution}
The first step towards this paper's general solution for $m>1$ was presented (but not published as a paper) in~\cite{VazKrs2010} for $m=2$ and $n=0$. In conference paper~\cite{Hu2015}, an extension to $m=2$ and $n=1$ is achieved. 

The contribution of this article is two-fold. For $(n+m)$--state heterodirectional systems, we derive a stabilizing boundary feedback law that ensures finite-time convergence of all the states to zero. For homodirectional systems (for which stability is not an issue), we design a boundary control law ensuring tracking of a given reference trajectory at the uncontrolled boundary. 

Both designs rely on the backstepping approach. A particular choice of the target system, featuring a cascade structure similar to~\cite[Section~3.5]{Coron2013}, enables the use of a classical Volterra integral transformation. Well-posedness of the system of kernel equations, which is the main technical challenge of this paper, is proved by a method of successive approximations using a novel recursive bound. 

In the case of heterodirectional systems, the approach yields a full-state feedback law that would necessitate full distributed measurements to be implemented, which is not realistic in practice. For this reason, we derive an observer relying on measurements of the states at a single boundary (the anti-controlled one). Along with the full-state feedback law, this yields an output feedback controller amenable to implementation. 

\paragraph{Organization} In Section~\ref{sec:system} we introduce the model equations. In Section~\ref{sec:stabilization} we present the stabilization result for heterodirectional systems: the target system is presented in~Section~\ref{sec:target} while the backstepping transformation is derived in Section~\ref{sec:transformation}. The design is summarized in Section~\ref{sec:main}. In Section~\ref{sec:observer} we present the boundary observer design. In Section~\ref{sec:motion_planning} we present the motion planning result for homodirectional systems. Section~\ref{sec:wellposedness} contains the main technical difficulty of the paper, i.e. the proof of well-posedness of the backstepping transformation. We conclude in Section~\ref{sec:outlook} by discussing open problems. 

%\textbf{Notations} 
%In the sequel, we write $\left\| f(t) \right\|_{\delta}$, for $\delta \in \mathbb{R}$ and $f : [0,1]\times \mathbb{R} \mapsto \mathbb{R}^p$ ($p \in \mathbb{N}^*$), the following modified $\mathcal{L}_2$-norm
%%
%\begin{align*}
	%\left\| f(t) \right\|_{\delta} =& \sqrt{\int_0^1 e^{\delta x} 
		%f(x,t)^T f(x,t) d x}
%\end{align*}
%%
%%and write $\left\| f(t) \right\| = \left\| f(t) \right\|_0$ the usual $\mathcal{L}_2$-norm.
%and, for~$\mathcal{F} : C \subset \mathbb{R}^2 \rightarrow \mathbb{R}^{p\times p}$ ($p \in \mathbb{N}^*$), the infinity norm
%\begin{align}
	%\ninf{\mathcal{F}}&=\max\limits_{(x,\xi)\in C} \ninf{\mathcal{F}(x,\xi)}
%\end{align}
%where  $\ninf{\mathcal{F}(x,\xi)}$ is the usual induced~$\infty$--norm for matrices. 
\section{System description}\label{sec:system}
We consider the following general linear hyperbolic system
\begin{align}
	u_t (t,x) + \Lambda^+ u_x (t,x) &= \Sigma^{++} u(t,x) + \Sigma^{+-} v(t,x)\label{eq:system1}\\
	v_t (t,x) - \Lambda^- v_x (t,x) &= \Sigma^{-+} u(t,x) + \Sigma^{--} v(t,x)\label{eq:system2}
\end{align}
with the following boundary conditions
\begin{align}
	u(t,0)&= Q_0 v(t,0),&v(t,1)&= R_1 u(t,1) + U(t) \label{eq:systemBC}
\end{align}
where
\begin{align}
	u&=\begin{pmatrix}
		u_1&\cdots&u_n
	\end{pmatrix}^T,&v&=\begin{pmatrix}
		v_1&\cdots&v_m
	\end{pmatrix}^T
\end{align}
\begin{align}
	\Lambda^{+}&=\begin{pmatrix}
		\lambda_1&&0\\
		&\ddots&\\
		0&&\lambda_n
	\end{pmatrix},&\Lambda^{-}&=\begin{pmatrix}
		\mu_1&&0\\
		&\ddots&\\
		0&&\mu_m
	\end{pmatrix}
	%\Lambda^+&=\text{diag}\left(\lambda_1,\cdots,\lambda_n\right),&\Lambda^-&=\text{diag}\left(\mu_1,\cdots,\mu_m\right)
	\label{eq:coefLambda}
\end{align}
with
\begin{align}
	-\mu_1<\cdots<-\mu_m<0<\lambda_1&\leq\cdots\leq\lambda_n \label{eq:speeds}
\end{align}
and %the coupling matrices have the following appropriate shapes
%\begin{align}
	%\Sigma^{++}&\in \mathbb{R}^{n\times n},&\Sigma^{+-}&\in \mathbb{R}^{n\times m},&\Sigma^{-+}&\in \mathbb{R}^{m\times n},&\Sigma^{--}&\in \mathbb{R}^{m\times m},&Q_0&\in \mathbb{R}^{n\times m},&R_1&\in \mathbb{R}^{m\times n}
%\end{align}
\begin{align}
	\Sigma^{++}&=\left\{\sigma^{++}_{ij}\right\}_{1\leq i \leq n, 1\leq j \leq n},&\Sigma^{+-}&=\left\{\sigma^{+-}_{ij}\right\}_{1\leq i \leq n, 1\leq j \leq m},\\
	\Sigma^{-+}&=\left\{\sigma^{-+}_{ij}\right\}_{1\leq i \leq m, 1\leq j \leq n},&\Sigma^{--}&=\left\{\sigma^{--}_{ij}\right\}_{1\leq i \leq m, 1\leq j \leq m} \label{eq:coefSigma}
\end{align}
\begin{align}
	Q_0&=\left\{q_{ij}\right\}_{1\leq i \leq n, 1\leq j \leq m},&R_1&=\left\{\rho_{ij}\right\}_{1\leq i \leq m, 1\leq j \leq n},\\U(t)& = \begin{pmatrix}
		U_1(t)&\cdots&U_m(t)
	\end{pmatrix}^T
\end{align}
\begin{rmk}\label{rmk:space}
	We consider here constant coupling coefficients and transport velocities for the sake of readability. The method straightforwardly extends to spatially varying coefficients, with more involved technical developments.   
\end{rmk}
Besides, we also make the following assumption without loss of generality
\begin{align}
	\forall j&=1,...,m&\sigma_{jj}^{--}&=0,
\end{align}
i.e. there are no (internal) diagonal coupling terms for $v$-system. Such coupling terms can be removed using a change of coordinates as presented in, e.g.,~\cite{Coron2013} and~\cite{Hu2015}. This yields spatially-varying coupling terms, which is not an issue in the light of Remark~\ref{rmk:space}.   

\begin{rmk}\label{rmk:isotachic}
If two or more states have the same transport speeds (i.e. $\mu_i=\mu_j$ for some $i\neq j$) we refer to those states as \emph{isotachic}. This case was intentionally avoided in (\ref{eq:speeds}). To deal with isotachic states, we consider the change of coordinates
$\bar v(t,x)=A(x)v(t,x)$.
The matrix $A(x)$ is a block-diagonal matrix, with $A_{ii}=1$ if $\mu_i\neq\mu_j$ for $j\neq i$. If there is a set of $n_i$ isotachic states (i.e.  there is $i$ such that $\mu_j=\mu_i$ for $j=i+1,\hdots,i+n_{i}-1$, then there is in $A(x)$ a corresponding block $B(x)$ of dimension $n_i\times n_i$ in $A(x)$. Each of these $B(x)$ is computed independently for each isotachic set of states. If we call $\Sigma_{iso}$ the matrix of coupling coefficients among these isotachic states (i.e. with coefficients $\sigma_{jk}^{--}$ for $j,k=i,i+1,\hdots,i+n_{i}-1$), then $B(x)$ is computed from the initial value problem $BÕ(x)=1/\mu_i B(x) \Sigma_{iso}$, $B(0)=I_{n_i\times n_i}$. It is easy to see that this transformation is invertible, since one can define a matrix $C(x)$  from $CÕ(x)=1/\mu_i \Sigma_{iso} B(x) $, $C(0)=I_{n_i\times n_i}$. One has that $C(x)$ is the inverse of $B(x)$ as $B(0)C(0)=I_{n_i\times n_i}$ and $\frac{d}{dx} B(x)C(x)=0$. Applying this invertible transformation eliminates the coupling coefficients between isotachic states, but  results in some spatially-varying coupling terms, which is not an issue as explained in Remark~\ref{rmk:space}.\end{rmk}

\section{Stabilization of heterodirectional systems}\label{sec:stabilization}
In this section, we derive a stabilizing feedback law for the general$~(n+m)$--state system. Notice that this is interesting only in the case~$n \neq 0$, since instability arises from coupling between states traveling in opposite directions. Following the backstepping approach, we seek to map system~\eqref{eq:system1}--\eqref{eq:systemBC} to a target system with desirable stability properties using an invertible Volterra transformation. 
\subsection{Target system}\label{sec:target}
\subsubsection{Target system design}
We map system~\eqref{eq:system1}--\eqref{eq:systemBC} to the following target system
\begin{align}
	\notag&\alpha_t (t,x) + \Lambda^+ \alpha_x (t,x) = \Sigma^{++} \alpha(t,x) + \Sigma^{+-} \beta(t,x)\\
	&\ \ +\int_0^x C^+(x,\xi)\alpha(\xi)d\xi+\int_0^x C^-(x,\xi)\beta(\xi)d\xi \label{eq:target1}\\
	&\beta_t (t,x) - \Lambda^- \beta_x (t,x) =   G(x) \beta(0) \label{eq:target2}
\end{align}
with the following boundary conditions
\begin{align}
	\alpha(t,0)&= Q_0 \beta(t,0),&\beta(t,1)&= 0\label{eq:targetBC}
\end{align}
where $C^+$ and $C^-$ are $L^{\infty}$ matrix functions on the domain
 \begin{align}
	\mathcal{T}&=\left\{0\leq \xi\leq x\leq 1\right\},
\end{align}
while~$G\in L^{\infty}(0,1)$ is a lower triangular matrix with the following structures 
\begin{align}\label{matrixG}
	G(x)&=\begin{pmatrix}
		0&\cdots&\cdots&0\\
		g_{2,1}(x)&\ddots&\ddots&\vdots\\
		\vdots&\ddots&\ddots&\vdots\\
		g_{m,1}(x)&\cdots&g_{m,m-1}(x)&0
	\end{pmatrix}.
\end{align}
The coefficients of~$C^+$, $C^-$ and~$G$ will be determined in section \ref{sec:transformation}.
%To achieve this, we use the following backstepping transformation
%\begin{align}
	%\beta(t,x) &= v(t,x) - \int_0^x \left[K(x,\xi)u(\xi) + L(x,\xi)v(\xi)\right]d\xi	
%\end{align}
\subsubsection{Stability of the target system}
The following lemma asseses the finite-time stability of the target system. 
\begin{Lemma}\label{lem:stabTarget}
	Consider system~\eqref{eq:target1},\eqref{eq:target2} with boundary conditions~\eqref{eq:targetBC}. Its zero equilibrium is reached in finite time~$t=t_F$, where
	\begin{align}\label{finitetime}
	t_F:=\frac{1}{\lambda_1}+\sum_{j=1}^m\frac{1}{\mu_j}.
	\end{align}
	% Its zero equilibrium is globally exponentially stable in the~$L^2$ sense.
\end{Lemma}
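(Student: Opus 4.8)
The plan is to exploit the cascade structure of the target system in two stages. In the first stage I would drive $\beta$ to zero in time $T_m:=\sum_{j=1}^m 1/\mu_j$; since this makes the inflow datum $\alpha(t,0)=Q_0\beta(t,0)$ vanish and removes the $\beta$-dependence from \eqref{eq:target1}, in the second stage the $\alpha$-equation becomes an autonomous homogeneous hyperbolic system with zero inflow, which I would then drive to zero in the additional time $1/\lambda_1$, giving $t_F=T_m+1/\lambda_1$.

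For the first stage I argue by induction on the component index $k$, using that $G$ is strictly lower triangular. The first row of $G$ is zero, so $\beta_1$ satisfies the pure transport equation $\partial_t\beta_1-\mu_1\partial_x\beta_1=0$ with $\beta_1(t,1)=0$; integrating along characteristics, which reach $x=1$ in time at most $1/\mu_1$, gives $\beta_1(t,\cdot)\equiv 0$ for $t\ge 1/\mu_1$. Assume $\beta_j(t,\cdot)\equiv 0$ for all $j<k$ and all $t\ge T_{k-1}:=\sum_{j=1}^{k-1}1/\mu_j$. Then for $t\ge T_{k-1}$ the source $\sum_{j<k}g_{k,j}(x)\beta_j(t,0)$ of the $\beta_k$-equation vanishes, so $\beta_k$ solves a homogeneous transport equation with zero datum at $x=1$, whence $\beta_k(t,\cdot)\equiv 0$ for $t\ge T_{k-1}+1/\mu_k=T_k$. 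After $m$ steps $\beta(t,\cdot)\equiv 0$ for $t\ge T_m$ (the relative sizes of the $\mu_j$ are irrelevant to this count).

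For the second stage, for $t\ge T_m$ the dynamics reduce to $\alpha_t+\Lambda^+\alpha_x=\Sigma^{++}\alpha+\int_0^x C^+(x,\xi)\alpha(\xi)\,d\xi$ with $\alpha(t,0)=0$, and I would show that $\alpha$ vanishes on the expanding region $\mathcal R:=\{(s,y):\ s\ge T_m,\ 0\le y\le\lambda_1(s-T_m)\}$. For $(s,y)\in\mathcal R$ the backward characteristic of any component $\alpha_i$ (speed $\lambda_i\ge\lambda_1$) hits $x=0$ at time $s-y/\lambda_i\ge T_m$, and one checks it stays inside $\mathcal R$; moreover the Volterra term $\int_0^y C^+(y,\xi)\alpha(\xi)\,d\xi$ samples $\alpha$ only at points $\xi\le y$, which again lie in $\mathcal R$. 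Hence the restriction $\alpha|_{\mathcal R}$ obeys a closed linear integral equation with zero boundary data, with $\mathcal R$ degenerating to the single point $(T_m,0)$ at $s=T_m$ where $\alpha=0$. Writing $N(s):=\sup\{|\alpha(s,y)|:(s,y)\in\mathcal R\}$, integration along characteristics yields $N(s)\le C\int_{T_m}^{s}N(\sigma)\,d\sigma$ with $N(T_m)=0$, so $N\equiv 0$ by Gronwall. At $t=T_m+1/\lambda_1=t_F$ one has $\lambda_1(t-T_m)=1$, hence $\alpha(t_F,\cdot)\equiv 0$, and likewise for all $t\ge t_F$; together with $\beta\equiv 0$ for $t\ge T_m$ this proves the lemma.

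The main obstacle is the second stage, specifically verifying that the nonlocal term $\int_0^x C^+(x,\xi)\alpha(\xi)\,d\xi$ cannot feed information from outside the cleared region back into it; this is exactly where the triangular, strictly-causal-in-$x$ structure of the target system is used, since the integral only looks leftward, toward points cleared strictly earlier, so the subsystem on $\mathcal R$ genuinely closes and Gronwall applies. A minor point is to fix the function space in which ``the equilibrium is reached'' (the $H^2$ solutions supplied by the well-posedness analysis embed into $C^1$ in one space dimension, so the characteristic computations are legitimate), and to be careful to use the slowest speed $\lambda_1$, rather than the individual $\lambda_i$, in the definition of $\mathcal R$.
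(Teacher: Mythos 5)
Your proof is correct and follows essentially the same two-stage strategy as the paper: first exhaust the strictly lower-triangular cascade in $\beta$ component-by-component to get $\beta\equiv 0$ at $t=\sum_{j=1}^m 1/\mu_j$, then propagate zeros through the now-autonomous $\alpha$-subsystem using the slowest speed $\lambda_1$ together with the causality (in $x$) of the Volterra term. The only presentational difference is in the second stage: the paper swaps the roles of $t$ and $x$ and invokes uniqueness of the resulting Cauchy problem in $x$, whereas you make the same finite-speed-of-propagation observation explicit via a Gronwall estimate on the expanding region $\mathcal{R}$, noting that $\int_0^x C^+(x,\xi)\alpha(\xi)\,d\xi$ only samples already-cleared points.
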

\begin{proof}
Noting \eqref{eq:target2}-\eqref{eq:targetBC} and \eqref{matrixG}, we find that the~$\beta$--system is in fact a cascade system, which allows us to explicitely solve it by recursion as follows. The explicit solution of $\beta_1$ is given by
\begin{align}\label{2.8z}
\begin{aligned}
\beta_1(t,x)=\begin{cases}
\beta_{1}(0,x+\mu_1 t) &\text{if } t< \frac{1-x}{\mu_1},\\
0&\text{if } t\geq  \frac{1-x}{\mu_1}.
\end{cases}
\end{aligned}
\end{align}
Notice in particular that~$\beta_1$ is identically zero for~$t\geq \mu_1^{-1}$. From the time~$t\geq \mu_1^{-1}$ on, we have that $\beta_2(t,x)$ satisfies the following equation
\begin{align}\label{2.9z}
\beta_{2t}(t,x)-\mu_2\beta_{2x}(t,x)=0.
\end{align}
Similarly, by expressing the solution along the characteristic lines, one obtains that
\begin{align}
 \beta_2(t,x)&\equiv 0\label{eq:gamma_20} \quad \forall  t \geq \mu_1^{-1}+\mu_2^{-1}.
\end{align}
Thus, by mathematical induction, one can easily get that $\beta_j(j=1,\cdots,m)$  vanishes after
\begin{align}
t=\sum_{k=1}^j\frac{1}{\mu_k}.
\end{align}
This yields that 
\begin{align}
\beta(t,x)\equiv 0,\ \ t>\sum\limits_{j=1}^m\frac{1}{\mu_j}.
\end{align}
When $t>\sum\limits_{j=1}^m\frac{1}{\mu_j}$, the $\alpha$--system becomes
\begin{align}
	\alpha_t (t,x) + \Lambda^+ \alpha_x (t,x) = \Sigma^{++} \alpha(t,x) +\int_0^x C^+(x,\xi)\alpha(\xi)d\xi\label{neweq:target1}
\end{align}
with the boundary conditions
\begin{align}
	\alpha(t,0)&= 0.\label{neweq:targetBC}
\end{align}
Since there are no zero transport velocities for the~$\alpha$--system (see \eqref{eq:speeds}), we may change the status of~$t$ and~$x$, and Equations~\eqref{neweq:target1} can be rewritten as
\begin{align}
	\alpha_x (t,x)+(\Lambda^+)^{-1}\alpha_t (t,x)  = (\Lambda^+)^{-1}\Sigma^{++} \alpha(t,x) +\int_0^x (\Lambda^+)^{-1} C^+(x,\xi)\alpha(\xi)d\xi\label{neweq:target2}
\end{align}
with the initial condition \eqref{neweq:targetBC}. Then by the uniqueness of the system \eqref{neweq:targetBC},\eqref{neweq:target2}, and noting the order of the transport speeds of the~$\alpha$--system (see \eqref{eq:speeds}),  this yields that~$\alpha$ identically vanishes for
\begin{align}
 t\geq \frac{1}{\lambda_1}+\sum_{j=1}^m\frac{1}{\mu_j} 
\end{align}
This concludes the proof.
\end{proof}
\subsection{Backstepping transformation}\label{sec:transformation}
To map system~\eqref{eq:system1}--\eqref{eq:systemBC} to the target system~\eqref{eq:target1}--\eqref{eq:targetBC}, we consider the following backstepping (Volterra) transformation
\begin{align}
\alpha(t,x)=& u(t,x)\\
\beta(t,x) =& v(t,x) - \int_0^x \left[K(x,\xi)u(\xi) + L(x,\xi)v(\xi)\right]d\xi\label{eq:BS}	
\end{align}
where the kernels to be determined~$K$ and~$L$ are defined on the triangular domain~$\mathcal{T}$.
%\begin{align}
%	\mathcal{T}&=\left\{0\leq \xi\leq x\leq 1\right\}
%\end{align}
Deriving~\eqref{eq:BS} with respect to space and time,
%yields, respectively (omitting the time argument)
%\begin{multline*}
	%\beta_t (x) = \Lambda^- v_x(x) + \Sigma^{-+} u(x) +\Sigma^{--} v(x) \\
	%+K(x,x)\Lambda^+u(x)-K(x,0)\Lambda^+u(0)+L(x,0)\Lambda^-v(0)\\
	 %-\int_0^x\left[K_\xi(x,\xi)\Lambda^+ + K(x,\xi)\Sigma^{++}\right]u(\xi)d\xi\\
	%-\int_0^x\left[K(x,\xi)\Sigma^{+-}\right]v(\xi)d\xi
	%-L(x,x)\Lambda^-v(x)\\
	%+\int_0^x\left[L_\xi(x,\xi)\Lambda^- - L(x,\xi)\Sigma^{--}\right]v(\xi)d\xi\\
	%-\int_0^x L(x,\xi)\Sigma^{-+}u(\xi)d\xi
%\end{multline*}
%and
%\begin{align*}
	%\beta_x(x) &= v_x(x) - K(x,x)u(x)-L(x,x)v(x) - \int_0^x \left[K_x(x,\xi)u(\xi) + L_x(x,\xi)v(\xi)\right]d\xi	
%\end{align*}
plugging into the target system equations and  noticing that~$\beta(t,0)\equiv v(t,0)$ yields the following system of  kernel equations
\begin{align}
	0 =& K(x,x)\Lambda^+ + \Lambda^- K(x,x) + \Sigma^{-+}\label{eq:hypotenuseplus}\\
	0 =& \Lambda^- L(x,x)-L(x,x)\Lambda^- + \Sigma^{--} \label{eq:hypotenuseminus}\\
	0=&K(x,0)\Lambda^+ Q_0+G(x)-L(x,0)\Lambda^- \label{eq:x0boundary}\\
	\notag0=&\Lambda^-K_x(x,\xi) - K_\xi(x,\xi)\Lambda^+ \\
	&-K(x,\xi)\Sigma^{++}-L(x,\xi)\Sigma^{-+}\\
	\notag0=&\Lambda^-L_x(x,\xi) + L_\xi(x,\xi)\Lambda^-\\
	&-L(x,\xi)\Sigma^{--}-K(x,\xi)\Sigma^{+-}\label{eq:kernelEDPmin}
\end{align}
and yields the following equations for~$C^-(x,\xi)$ and~$C^+(x,\xi)$
\begin{align}
	C^-(x,\xi)&=L(x,\xi)+\int_\xi^x C^-(x,s)L(s,\xi)d\xi\label{eq:Cminus}\\
	C^+(x,\xi)&=K(x,\xi)+\int_\xi^x C^-(x,s)K(s,\xi)d\xi\label{eq:Cplus}
\end{align}
\begin{rmk}
	For each~$x\in[0,1]$, Equation~\eqref{eq:Cminus} is a Volterra equation of the second kind on~$[0,x]$ with~$C^-(x,\cdot)$ as the unknown. Besides, Equation~\eqref{eq:Cplus} explicitly gives~$C^+(x,\xi)$ as a function of~$C^-(x,\xi)$ and~$K(x,\xi)$. Therefore, provided the kernels~$K$ and~$L$ are well-defined and bounded, so are~$C^+$ and~$C^-$.
\end{rmk}
Developing equations~\eqref{eq:hypotenuseplus}--\eqref{eq:kernelEDPmin} leads to the following set of kernel PDEs

{\underline{for $1\leq i \leq m$, $1\leq j \leq n$}}
\begin{align}
	\mu_i \partial_x K_{ij}(x,\xi) - \lambda_j \partial_\xi K_{ij}(x,\xi) =
	\sum\limits_{k=1}^n\sigma^{++}_{kj}K_{ik}(x,\xi)+\sum\limits_{p=1}^m \sigma^{-+}_{pj}L_{ip}(x,\xi)\label{eq:developedKernelK}
	\end{align}
	{\underline{for $1\leq i \leq m$, $1\leq j \leq m$}}
	\begin{align}
	\mu_i \partial_x L_{ij}(x,\xi) + \mu_j \partial_\xi L_{ij}(x,\xi) =	\sum\limits_{p=1}^m\sigma^{--}_{pj} L_{ip}(x,\xi)+\sum\limits_{k=1}^n \sigma^{+-}_{kj}K_{ik}(x,\xi)\label{eq:developedKernelL}
\end{align}
along with the following set of boundary conditions
\begin{align}
	\forall 1&\leq i \leq m, 1\leq j \leq n,&
		K_{ij} (x,x)&= - \cfrac{\sigma^{-+}_{ij}}{\mu_i+\lambda_j} \stackrel{\Delta}{=}k_{ij} \label{eq:hypotenuseK}\\
%\end{multline}
%\begin{align}
\forall 1&\leq i,j \leq m,  i \neq j,& 	L_{ij} (x,x)&= - \cfrac{\sigma^{--}_{ij}}{\mu_i-\mu_j}\stackrel{\Delta}{=}l_{ij} \label{eq:hypotenuseL}\\
\forall 1&\leq i \leq j \leq m,	&	\mu_j L_{ij}(x,0)& =\sum\limits_{k=1}^n \lambda_k K_{ik}(x,0)q_{k,j}\label{eq:modifx0boundary}
\end{align}
To ensure well-posedness of the kernel equations, 
%(\ref{eq:hypotenuseK})-(\ref{eq:modifx0boundary}) can not guarantee the well-posedness of the system  (\ref{eq:developedKernelK})-(\ref{eq:developedKernelL}), since we may lose some information for the competents $L_{ij}(i>j)$. In order to overcome this difficulty, one can design
we add the following artificial boundary conditions for $L_{ij}(i>j)$
\begin{align}\label{eq:artificialboundary}
L_{ij}(1,\xi)=l_{ij}, \ \text{for} \ \ 1&\leq j<i \leq m
\end{align}
While the~$g_{ij}$, for $1\leq j<i \leq n$, are given by
\begin{align}\label{eq:gijfunctionLij}
	g_{ij}(x)&=\mu_j L_{ij}(x,0) - \sum\limits_{p=1}^n \lambda_p q_{pj}  K_{ip}(x,0)
\end{align} 
provided the~$K$ and~$L$ kernels are properly defined by~\eqref{eq:developedKernelK}--\eqref{eq:artificialboundary}, which we prove in the next section. 
\begin{rmk}
The choice of imposing~\eqref{eq:artificialboundary} as the boundary condition for $L_{ij}(1\leq j<i\leq m)$, on the boundary~$x=1$ is arbitrary and was designed to ensure continuity of some of the kernels. This degree of freedom in the control design had never appeared in previous backstepping designs for hyperbolic system~\cite{Coron2013,DiMeglio2013}. The impact of the boundary values of~$L_{ij}$, $1\leq j<i\leq m$ on the transient behavior of the closed-loop system remains an open question, out of the scope of this article. 
\end{rmk}
\begin{rmk}
If there are isotachic states, and the transformation explained in Remark~\ref{rmk:isotachic} is applied, then the $L_{ij}$ kernels for $i,j$ corresponding to isotachic states ($\mu_i=\mu_j$) have all boundary conditions of the type (\ref{eq:modifx0boundary}) instead of (\ref{eq:hypotenuseL})---which would become singular---or (\ref{eq:artificialboundary}). The results that follow do not change, but we have omitted the case for the sake of brevity.
\end{rmk}

The well-posedness of the target system equations is assessed in the following Theorem.
\begin{theorem}\label{the:wellposednessKandL}
Consider system~\eqref{eq:developedKernelK}--\eqref{eq:artificialboundary}. There exists a unique solution $K$ and $L$ in $L^\infty(\mathcal{T})$. Moreover, all the boundary traces for the $K$-kernel and $L$-kernel are functions of $L^{\infty}(0,1)$.
\end{theorem}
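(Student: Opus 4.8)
The plan is to recast the kernel system~\eqref{eq:developedKernelK}--\eqref{eq:artificialboundary} as an equivalent system of integral equations by integrating each scalar equation along the characteristics of its transport operator, and then to solve that integral system by successive approximations. For $K_{ij}$ the characteristics have slope $d\xi/dx=-\lambda_j/\mu_i<0$, so the backward characteristic through any $(x,\xi)\in\mathcal{T}$ meets the hypotenuse $\{\xi=x\}$, where the datum $k_{ij}$ of~\eqref{eq:hypotenuseK} is prescribed; integrating~\eqref{eq:developedKernelK} along that segment represents $K_{ij}(x,\xi)$ as $k_{ij}$ plus a line integral of the $K_{ik}$'s and $L_{ip}$'s. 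For $L_{ij}$ the characteristics have slope $\mu_j/\mu_i>0$, and, by the ordering~\eqref{eq:speeds}, one must distinguish three situations: for $i<j$ the backward characteristic meets either the hypotenuse (datum $l_{ij}$, cf.~\eqref{eq:hypotenuseL}) or the edge $\{\xi=0\}$, where~\eqref{eq:modifx0boundary} fixes $L_{ij}(x,0)$ in terms of the traces $K_{ik}(x,0)$; for $i=j$ it always meets $\{\xi=0\}$; and for $i>j$ the forward characteristic meets either the hypotenuse or the artificial edge $\{x=1\}$, where~\eqref{eq:artificialboundary} again prescribes $l_{ij}$. Integrating~\eqref{eq:developedKernelL} along the relevant segment, and substituting into the result the representation already obtained for $K$ wherever a trace $K_{ik}(\cdot,0)$ enters through~\eqref{eq:modifx0boundary}, yields for each component an integral equation whose right-hand side consists only of constant boundary data plus line integrals, over characteristic segments, of the interior values of the kernels. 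Collecting all these scalar identities produces a closed fixed-point equation $\Phi=\Phi_0+\mathcal{F}[\Phi]$ in $L^\infty(\mathcal{T})$, with $\Phi=(K,L)$, $\Phi_0$ the constant boundary-data part, and $\mathcal{F}$ a linear integral operator whose kernels are supported on characteristic segments of total length bounded by a fixed constant.

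Existence is then obtained by successive approximations: set $\Phi^0=\Phi_0$, $\Phi^{n+1}=\Phi_0+\mathcal{F}[\Phi^n]$, and estimate the increments $\Delta^n:=\Phi^{n+1}-\Phi^n=\mathcal{F}[\Phi^n]-\mathcal{F}[\Phi^{n-1}]$. The difficulty---and the reason the textbook estimate $\sup_{\mathcal{T}}|\Delta^n|\le (Mx)^n/n!$ used in the $2\times2$ and scalar-actuation designs does not close here---is that the substitution of the $\{\xi=0\}$ data couples the value of an $L$-component at an interior point to the values of several $K$-components along an entire edge, while the artificial boundary at $\{x=1\}$ effectively reverses the direction of propagation for the components with $i>j$; consequently the characteristic domains of dependence of the various components do not nest but interleave, and a single Gr\"onwall step no longer suffices. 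The remedy is to run the induction with a carefully designed recursive bound that is uniform in the active spatial variable and whose growth in $n$ is controlled by a fixed geometric factor times a combinatorial factor accounting for the number of edge substitutions---the ``novel recursive bound'' announced in the introduction. With such a bound one gets $\|\Delta^n\|_{L^\infty(\mathcal{T})}\le C\,\bar M^{\,n}/n!$ (up to an inessential polynomial in $n$), so $\sum_n\Delta^n$ converges uniformly; its limit $\Phi=(K,L)\in L^\infty(\mathcal{T})$ solves the integral system, hence~\eqref{eq:developedKernelK}--\eqref{eq:artificialboundary} in the mild sense, and classically away from finitely many characteristic lines.

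Uniqueness follows from the same estimate applied to the difference $\Psi$ of two $L^\infty$ solutions, which satisfies $\Psi=\mathcal{F}[\Psi]=\mathcal{F}^n[\Psi]$ for all $n$, whence $\|\Psi\|_{L^\infty(\mathcal{T})}\le (\bar M^{\,n}/n!)\,\|\Psi\|_{L^\infty(\mathcal{T})}\to 0$. For the boundary traces, one reads off from the integral representation that each scalar kernel is continuous on $\mathcal{T}$ away from finitely many characteristic lines---those issued from the corners and from the points where the data prescribed on adjacent edges need not agree---and continuous up to the boundary within each of the resulting subregions; in particular the traces $K_{ij}(x,x)$, $K_{ij}(x,0)$, $L_{ij}(x,x)$, $L_{ij}(x,0)$ and $L_{ij}(1,\xi)$ are well defined and bounded by the same constant that bounds $\Phi$, hence lie in $L^\infty(0,1)$. (The choice in~\eqref{eq:artificialboundary} of the value $l_{ij}$---the same value imposed on the hypotenuse by~\eqref{eq:hypotenuseL}---is what keeps $L_{ij}$, $i>j$, from jumping across the separating characteristic.) Finally~\eqref{eq:gijfunctionLij} defines $G\in L^\infty(0,1)$, and for each fixed $x$ equation~\eqref{eq:Cminus} is a Volterra equation of the second kind in $\xi$ while~\eqref{eq:Cplus} is explicit, so $C^-,C^+\in L^\infty(\mathcal{T})$.

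I expect the main obstacle to be the second step: finding the right induction hypothesis---the novel recursive bound---under which the successive-approximation series converges despite the mixed boundary conditions (the hypotenuse, the edge $\{\xi=0\}$ carrying the $K(\cdot,0)$ traces, and the artificial edge $\{x=1\}$), which make the characteristic domains of dependence of the different kernel components interleave rather than nest, in contrast with the earlier heterodirectional backstepping designs of~\cite{Coron2013,DiMeglio2013}.
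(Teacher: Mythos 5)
Your proposal lays out the right skeleton---integrate each scalar kernel PDE along its characteristic to reach the edge carrying its datum, substitute the $\xi=0$ trace of $K$ wherever~\eqref{eq:modifx0boundary} enters, recast the result as a fixed-point equation for the vector of kernels, and run successive approximations---and this is exactly the structure of the paper's proof. You also correctly diagnose why the textbook estimate $(Mx)^q/q!$ does not close: the characteristics of the $L_{ij}$ kernels with $i>j$ travel \emph{toward} the edge $x=1$, so the quantity $x$ \emph{increases} along them, and the domains of dependence of the different components interleave rather than nest. Your argument for uniqueness and for the $L^\infty$ boundary traces (continuity on subregions, pointwise bound inherited from the interior estimate) is the standard consequence once the convergence estimate is in hand, and is fine.

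Where the proposal falls short of a proof is precisely at the step you yourself flag as the main obstacle: you posit ``a carefully designed recursive bound that is uniform in the active spatial variable... times a combinatorial factor accounting for the number of edge substitutions,'' but neither of these phrases corresponds to a worked estimate, and the combinatorial picture does not by itself restore factorial decay. The paper's actual device is different and quite specific: one picks a weight $\epsilon$ with
\[
0<\epsilon<1-\max_{1\le j<i\le m}\frac{\mu_i}{\mu_j},
\]
and proves the pointwise induction hypothesis
\[
|\Delta H_i(x,\xi)|\le \bar\phi\,\frac{M^q\,(x-(1-\epsilon)\xi)^q}{q!},
\]
i.e.\ the role of the Volterra variable is taken over by the \emph{weighted} quantity $x-(1-\epsilon)\xi$ (not by $x$, and not by any uniform constant). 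The point is a small lemma, and the real content of the proof, showing that $x-(1-\epsilon)\xi$ is strictly decreasing along \emph{every} characteristic of \emph{every} component. Indeed, along the $K_{ij}$ characteristics $\frac{d}{ds}\bigl(x-(1-\epsilon)\xi\bigr)=-\mu_i-(1-\epsilon)\lambda_j<0$; along the $L_{ij}$ characteristics with $i\le j$ one gets $-\mu_i+(1-\epsilon)\mu_j<0$ by the ordering of the $\mu$'s; and along the reversed characteristics with $i>j$ one gets $\mu_i-(1-\epsilon)\mu_j$, which is negative exactly because of the constraint on $\epsilon$. This monotonicity makes the change of variable $\tau=x(s)-(1-\epsilon)\xi(s)$ legitimate in each line integral and yields $\int_0^{s^F}(\cdot)^q\,ds\le M_\lambda\,(x-(1-\epsilon)\xi)^{q+1}/(q+1)$, from which the induction closes with $M=(n\bar\lambda\underline\lambda\bar q+1)(n+m)\bar\sigma M_\lambda$, giving $|\mathbf H|\le\bar\phi\,\mathrm e^{M(x-(1-\epsilon)\xi)}$. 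Without identifying this weight (or an equivalent one), the convergence step of your argument does not go through; so the proposal is a correct plan with the central estimate missing rather than a complete proof.
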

The proof of this Theorem is the main technical difficulty of the paper and is presented in Section~\ref{sec:wellposedness}.
%\begin{align}
				%\mu_j L_{i,j}(x,0) &=\sum\limits_{k=1}^n \lambda_k K_{i,k}(x,0)q_{k,j} & \text{if } i\geq j \label{eq:modifx0boundary}\\
%\mu_j L_{i,j}(x,0) &=g_{i,j}(x) + \sum\limits_{k=1}^n \lambda_k K_{i,k}(x,0)q_{k,j} & \text{if } i<j
%\end{align}
%Therefore, we set
\subsection{Control law and main stabilization result}\label{sec:main}
We are now ready to state the main stabilization result as follows. 
\begin{theorem}\label{the:main}
Consider system~\eqref{eq:system1}-\eqref{eq:system2} with boundary conditions~\eqref{eq:systemBC} and the following feedback control law
\begin{align}
 U(t) = -R_1u(t,1) +\int_0^1 \left[K(1,\xi)u(\xi) + L(1,\xi)v(\xi)\right]d\xi\label{eq:controlLaw}
\end{align}
For any initial condition $(u_0,v_0)\in (L^\infty(0,1))^{(n+m)\times(n+m)}$, the zero equilibrium is reached in finite time~$t=t_F$, where~$t_F$ is given by \eqref{finitetime}.
\end{theorem}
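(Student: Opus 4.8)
The plan is to exploit the backstepping transformation~\eqref{eq:BS} together with the finite-time stability of the target system established in Lemma~\ref{lem:stabTarget}. First I would note that, by Theorem~\ref{the:wellposednessKandL}, the kernels $K$ and $L$ are well-defined in $L^\infty(\mathcal{T})$ with $L^\infty$ boundary traces, so~\eqref{eq:BS} is a bounded linear map on $(L^\infty(0,1))^{n+m}$; and, by construction (Section~\ref{sec:transformation}, where the kernel equations~\eqref{eq:developedKernelK}--\eqref{eq:artificialboundary} were derived precisely so that this holds), it maps any solution of~\eqref{eq:system1}--\eqref{eq:systemBC} to a solution of the target system~\eqref{eq:target1}--\eqref{eq:targetBC}, with $C^\pm$ given by~\eqref{eq:Cminus}--\eqref{eq:Cplus} and $G$ by~\eqref{eq:gijfunctionLij}. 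The crucial observation is that evaluating~\eqref{eq:BS} at $x=1$ and substituting the boundary condition $v(t,1)=R_1 u(t,1)+U(t)$ from~\eqref{eq:systemBC}, the feedback law~\eqref{eq:controlLaw} is exactly the choice of $U(t)$ that enforces $\beta(t,1)=0$, i.e. the homogeneous boundary condition in~\eqref{eq:targetBC}.

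Second, I would establish that~\eqref{eq:BS} is invertible, with a bounded inverse of the same Volterra form
\[
v(t,x)= \beta(t,x) + \int_0^x \left[ K^{I}(x,\xi)\alpha(\xi) + L^{I}(x,\xi)\beta(\xi)\right] d\xi,
\]
with $K^{I},L^{I}\in L^\infty(\mathcal{T})$. This follows either by inserting this ansatz into~\eqref{eq:BS} and solving the resulting Volterra equations of the second kind for $K^{I},L^{I}$ by successive approximations (using boundedness of $K,L$), or by re-running the backstepping argument in the reverse direction, mapping the target system back to the plant; the associated kernel equations share the triangular/cascade structure, so the well-posedness proof of Section~\ref{sec:wellposedness} applies. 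Boundedness of both the direct and the inverse transformation is what allows the finite-time decay to transfer from $(\alpha,\beta)$ to $(u,v)$.

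Third, I would invoke Lemma~\ref{lem:stabTarget}: since $\beta(t,1)=0$, the target state $(\alpha,\beta)$ vanishes for all $t\geq t_F$ with $t_F$ as in~\eqref{finitetime}; applying the bounded inverse transformation yields $u(t,\cdot)\equiv\alpha(t,\cdot)\equiv 0$ and $v(t,\cdot)\equiv 0$ for $t\geq t_F$, which is the claim. A preliminary point to dispatch is well-posedness of the closed loop itself: for $(u_0,v_0)\in (L^\infty(0,1))^{n+m}$ one gets a unique broad solution by the method of characteristics plus a fixed-point argument, the feedback~\eqref{eq:controlLaw} being a bounded functional of the current state; equivalently, one first solves the target system, whose cascade structure makes existence transparent, and pulls back through the inverse transformation.

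The main obstacle is not the finite-time argument, which is immediate once the transformation and its inverse are in hand, but rather (i) justifying invertibility and $L^\infty$-boundedness of the inverse Volterra transformation given that the kernels here are only $L^\infty$ (unlike the classical $C^1$ situation), and (ii) handling the low regularity of the data: the formal differentiations leading to the kernel equations must be read in a weak sense, and the equivalence ``plant trajectory $\leftrightarrow$ target trajectory'' must be phrased for broad solutions. Both are routine given Theorem~\ref{the:wellposednessKandL} and the cascade structure of the target system.
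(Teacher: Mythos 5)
Your argument is essentially the paper's own: verify that the feedback law is transformation \eqref{eq:BS} evaluated at $x=1$, note that \eqref{eq:BS} is a Volterra equation of the second kind and hence boundedly invertible (the paper simply cites Hochstadt where you sketch a successive-approximations argument), and transfer the finite-time decay of Lemma~\ref{lem:stabTarget} through the inverse transformation. The extra remarks you make on closed-loop well-posedness and on interpreting the kernel equations weakly are sound but are left implicit in the paper's proof.
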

\begin{proof}
First, notice that evaluating transformation~\eqref{eq:BS} at~$x=1$ yields~\eqref{eq:controlLaw}. Besides, rewriting transformation~\eqref{eq:BS} as follows
\begin{align}\label{final kernel}
\left(\begin{array}{c}\alpha(t,x)\\ \beta(t,x) \end{array}\right)=\left(\begin{array}{c}u(t,x)\\ v(t,x) \end{array}\right)
-\int_0^x\left(\begin{array}{cc}0&0\\ K(x,\xi)& L(x,\xi) \end{array}\right)\left(\begin{array}{c}u(t,\xi)\\ v(t,\xi) \end{array}\right) d\xi.
\end{align}
one notices that it is a classical Volterra equation of the second kind. One can check from, e.g.,~\cite{Hochstadt1973} that there exists a unique matrix function $\mathcal{R}\in (L^{\infty}(\mathcal{T}))^{(n+m)\times (n+m)}$ such that
\begin{align}\label{invertible}
\left(\begin{array}{c}u(t,x)\\ v(t,x) \end{array}\right)=\left(\begin{array}{c}\alpha(t,x)\\ \beta(t,x) \end{array}\right)-\int_0^x\mathcal{R}(x,\xi)\left(\begin{array}{c}\alpha(t,\xi)\\ \beta(t,\xi) \end{array}\right) d\xi.
\end{align}
Applying Lemma~\ref{lem:stabTarget} implies that $(\alpha,\beta)$ go to zero in finite time $t = t_F$ , therefore, by \eqref{invertible}, $(u,v)$ also converge to zero in finite time.
\end{proof}

\section{Uncollocated observer design and output feedback controller}\label{sec:observer}
In this section, we derive an observer that relies on the measurement of the~$v$ states at the left boundary, i.e.
\begin{align}
	y(t)&=v(t,0)
\end{align}
Then, using the estimates from the observer along with the control law~\eqref{eq:controlLaw}, we derive an output feedback controller. 
\subsection{Observer design}
The observer equations read as follows 
\begin{align}
	 \hat{u}_t (t,x) + \Lambda^+ \hat{u}_x (t,x) =& \Sigma^{++} \hat{u}(t,x) + \Sigma^{+-} \hat{v}(t,x)-P^+(x)(\hat{v}(t,0)-v(t,0))\label{eq:obs1}\\
	\hat{v}_t (t,x) - \Lambda^- \hat{v}_x (t,x) =& \Sigma^{-+} \hat{u}(t,x) + \Sigma^{--} \hat{v}(t,x)-P^-(x)(\hat{v}(t,0)-v(t,0))\label{eq:obs2}
\end{align}
with the following boundary conditions
\begin{align}
	\hat{u}(t,0)&= Q_0 v(t,0),&\hat{v}(t,1)&= R_1 \hat{u}(t,1) + \hat{u}(t) \label{eq:obsBC}
\end{align}
where~$P^+(\cdot)$ and~$P^-(\cdot)$ have yet to be designed. This yields the following error system
\begin{align}
	\tilde{u}_t (t,x) + \Lambda^+ \tilde{u}_x (t,x) =& \Sigma^{++} \tilde{u}(t,x) + \Sigma^{+-} \tilde{v}(t,x)-P^+(x)\tilde{v}(t,0)\label{eq:errorobs1}\\
	\tilde{v}_t (t,x) - \Lambda^- \tilde{v}_x (t,x) =& \Sigma^{-+} \tilde{u}(t,x) + \Sigma^{--} \tilde{v}(t,x)-P^-(x)\tilde{v}(t,0)\label{eq:errorobs2}
\end{align}
with the following boundary conditions
\begin{align}
	\tilde{u}(t,0)&=0,&\tilde{v}(t,1)&= R_1 \tilde{u}(t,1) \label{eq:errorobsBC}
\end{align}
\begin{rmk}
	One should notice that the output is directly injected at the left boundary, which means potential sensor noise is only filtered throughout the spatial domain. Combining the approach of~\cite{DiMeglio2013} and the cascade structure of~\eqref{eq:target1}--\eqref{eq:targetBC}, we now derive a target system and backstepping transformation to design observer gains~$P^+(\cdot)$ and~$P^-(\cdot)$ that yield finite-time stability of the error system~\eqref{eq:errorobs1}--\eqref{eq:errorobsBC}.
\end{rmk}
\subsection{Target system and backstepping tranformation}
We map system~\eqref{eq:errorobs1}--\eqref{eq:errorobsBC} to the following target system
\begin{align}
	\tilde{\alpha}_t (t,x) + \Lambda^+ \tilde{\alpha}_x (t,x) =& \Sigma^{++} \tilde{\alpha}(t,x) + \int_0^x D^+(x,\xi)\tilde{\alpha}(\xi)d\xi\label{eq:targetobs1}\\
	\tilde{\beta}_t (t,x) - \Lambda^- \tilde{\beta}_x (t,x) =&  \Sigma^{-+}\tilde{\alpha}(t,x) +\int_0^x D^-(x,\xi)\tilde{\alpha}(\xi)d\xi  \label{eq:targetobs2}
\end{align}
with the following boundary conditions
\begin{align}
	\tilde{\alpha}(t,0)&= 0,&\tilde{\beta}(t,1)&= R_1\tilde{\alpha}(t,1)-\int_0^1 H(\xi)\tilde{\beta}(\xi)d\xi\label{eq:targetobsBC}
\end{align}
where $D^+$ and $D^-$ are $L^{\infty}$ matrix functions on the domain $\mathcal{T}$ and~$H\in L^{\infty}(0,1)$ is an upper triangular matrix with the following structure 
\begin{align}\label{eq:H}
	%H(x)&=\begin{pmatrix}
		%0&\cdots&\cdots&0\\
		%h_{2,1}(x)&\ddots&\ddots&\vdots\\
		%\vdots&\ddots&\ddots&\vdots\\
		%h_{m,1}(x)&\cdots&h_{m,m-1}(x)&0
	%\end{pmatrix}.
	H(x)&=\begin{pmatrix}
		0&h_{1,2}(x)&\cdots&h_{1,m}(x)\\
		\vdots&\ddots&\ddots&\vdots\\
		\vdots&\ddots&\ddots&h_{m-1,m}(x)\\
		0&\cdots&\cdots&0
	\end{pmatrix}
\end{align}
all of which have yet to be determined.
\begin{proposition}%[Stability of the observer target system]
\label{prop:stabtargetObs}
	The solutions of system~\eqref{eq:targetobs1}--\eqref{eq:targetBC} converge to zero in finite time. More precisely, one has
	\begin{align}
			\forall t\geq t_F,\quad \tilde{\alpha}&\equiv \tilde{\beta} \equiv 0
	\end{align}
	where~$t_F$ is defined by~\eqref{finitetime}.
\end{proposition}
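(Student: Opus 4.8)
The plan is to exploit the cascade structure of the observer target system, exactly as in the proof of Lemma~\ref{lem:stabTarget}, but with the two subsystems handled in the reverse order: here the $\tilde\alpha$-equation is already autonomous, so it is extinguished first, after which the $\tilde\beta$-equation degenerates into a pure transport cascade.

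\emph{Step 1: finite-time extinction of $\tilde\alpha$.} Equation~\eqref{eq:targetobs1} together with the boundary condition $\tilde\alpha(t,0)=0$ forms a closed system in $\tilde\alpha$ alone, of exactly the form~\eqref{neweq:target1}--\eqref{neweq:targetBC} (with $C^+$ replaced by $D^+$). I would invoke verbatim the last part of the proof of Lemma~\ref{lem:stabTarget}: since all transport speeds of the $\tilde\alpha$-system are positive, one swaps the roles of $t$ and $x$, so that the Volterra term $\int_0^x D^+(x,\xi)\tilde\alpha(\xi)\,d\xi$ acts causally in the new ``time'' variable, and then uses uniqueness together with the ordering $\lambda_1\le\cdots\le\lambda_n$ to conclude $\tilde\alpha(t,x)\equiv 0$ for all $t\ge 1/\lambda_1$. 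Intuitively, the slowest characteristic needs time $1/\lambda_1$ to flush the domain, and the linear coupling terms $\Sigma^{++}\tilde\alpha$ and $\int_0^x D^+\tilde\alpha$ cannot revive a vanished solution.

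\emph{Step 2: the $\tilde\beta$-cascade.} For $t\ge 1/\lambda_1$ the right-hand side of~\eqref{eq:targetobs2} and the term $R_1\tilde\alpha(t,1)$ in~\eqref{eq:targetobsBC} both vanish, leaving $\tilde\beta_t-\Lambda^-\tilde\beta_x=0$ with $\tilde\beta(t,1)=-\int_0^1 H(\xi)\tilde\beta(t,\xi)\,d\xi$. Since $H$ is strictly upper triangular, $(\tilde\beta(t,1))_m=0$, so $\tilde\beta_m$ satisfies $\tilde\beta_{m,t}-\mu_m\tilde\beta_{m,x}=0$ with zero incoming data at $x=1$, hence $\tilde\beta_m\equiv 0$ for $t\ge 1/\lambda_1+1/\mu_m$. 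Proceeding by downward induction on the component index: once $\tilde\beta_{i+1},\dots,\tilde\beta_m$ have vanished (which occurs for $t\ge 1/\lambda_1+\sum_{k=i+1}^m 1/\mu_k$), the $i$-th boundary condition reads $(\tilde\beta(t,1))_i=-\sum_{j>i}\int_0^1 h_{ij}(\xi)\tilde\beta_j(t,\xi)\,d\xi=0$, and integrating along the characteristics of $\tilde\beta_{i,t}-\mu_i\tilde\beta_{i,x}=0$ gives $\tilde\beta_i\equiv 0$ for $t\ge 1/\lambda_1+\sum_{k=i}^m 1/\mu_k$. Taking $i=1$ yields $\tilde\beta\equiv 0$ for $t\ge 1/\lambda_1+\sum_{k=1}^m 1/\mu_k=t_F$; since this time dominates $1/\lambda_1$, Step~1 also gives $\tilde\alpha\equiv 0$ there, which is the claim.

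\emph{Main obstacle.} The only genuinely delicate point is Step~1 — the finite-time decay of the $\tilde\alpha$-subsystem in spite of its nonlocal (in $x$) integral term — and this is already dispatched by the argument in the proof of Lemma~\ref{lem:stabTarget}, so in this proposition it reduces to a citation. The remaining work is bookkeeping: Step~2 must respect the triangular structure of $H$ (the cascade runs from component $m$ down to component $1$, the opposite direction to the $G$-cascade of Lemma~\ref{lem:stabTarget}), and the stated value $t_F$ is obtained precisely because $\tilde\alpha$ is extinguished within the \emph{sharp} time $1/\lambda_1$, after which the $m$ transport steps contribute the additional $\sum_{k=1}^m 1/\mu_k$.
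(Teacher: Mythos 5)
Your proof is correct and follows the same cascade argument that the paper has in mind: the paper's own proof of Proposition~\ref{prop:stabtargetObs} is just a two-sentence remark pointing to Lemma~\ref{lem:stabTarget}, whereas you fill in the details, correctly noting that the two subsystems are handled in the reverse order and that, because $H$ is strictly upper triangular, the $\tilde\beta$-cascade must be unwound by downward induction from the slowest state $\tilde\beta_m$ to the fastest state $\tilde\beta_1$.
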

\begin{Proof}
	The system consists in a cascade of the~$\tilde{\alpha}$--system (that has zero input at the left boundary) into the~$\tilde{\beta}$--system. Further, the~$\tilde{\beta}$ is a cascade of its slow states into its fast states. The rigorous proof follows the same steps that the proof of Lemma~\ref{lem:stabTarget} and is therefore omitted here. 
\end{Proof}

To map system~\eqref{eq:errorobs1}--\eqref{eq:errorobsBC} to the target system~\eqref{eq:targetobs1}--\eqref{eq:targetobsBC}, we consider the following backstepping (Volterra) transformation
\begin{align}
\tilde{u}(t,x)&=\tilde{\alpha}(t,x)+\int_0^xM(x,\xi)\tilde{\beta}(\xi)d\xi\label{eq:BS1}\\
\tilde{v}(t,x)&=\tilde{\beta}(t,x)+\int_0^x N(x,\xi)\tilde{\beta}(\xi)d\xi\label{eq:BS2}	
\end{align}
where the kernels to be determined~$M$ and~$N$ are defined on the triangular domain~$\mathcal{T}$.
%\begin{align}
%	\mathcal{T}&=\left\{0\leq \xi\leq x\leq 1\right\}
%\end{align}
Deriving~\eqref{eq:BS1},\eqref{eq:BS2} with respect to space and time yields
 %(omitting the time argument)
%\begin{align*}
	%0=&\tilde{u}_t(x)+\Lambda^+\tilde{u}_x(x)-\Sigma^{++}\tilde{u}(x)-\Sigma^{+-}\tilde{v}(x)+P^+(x)\tilde{v}(0)\\
	%=&\left[-P^+(x)-M(x,0)\Lambda^-\right]\tilde{\beta}(0)\\
	%&+\left[\Lambda^+M(x,x)+M(x,x)\Lambda^--\Sigma^{+-}\right]\tilde{\beta}(x)\\
	%&+\int_0^x\left[D^+(x,\xi)+M(x,\xi)\Sigma^{-+}\right.\\
	%&\left.+\int_\xi^xM(x,s)D^-(s,\xi)ds\right]\tilde{\alpha}(\xi)d\xi\\
	%&+\int_0^x\left[\Lambda^+M_x(x,\xi)-M_\xi(x,\xi)\Lambda^--\Sigma^{+-}N(x,\xi)\right.\\
	%&\left.\quad -\Sigma^{++}M(x,\xi)\right]\tilde{\beta}(\xi)d\xi
%\end{align*}
%and
%\begin{align*}
	%0=	&\tilde{v}_t(x)+\Lambda^+\tilde{v}_x(x)-\Sigma^{-+}\tilde{u}(x)-\Sigma^{--}\tilde{v}(x)+P^-(x)\tilde{v}(0)\\
	%=&\left[-P^-(x)+N(x,0)\Lambda^-\right]\tilde{\beta}(0)+\left[\Lambda^-N(x,x)-N(x,x)\Lambda^-\right]\tilde{\beta}(t,x)\\
	%&+\int_0^x\left[D^-(x,\xi)+N(x,\xi)\Sigma^{-+}+\int_\xi^xN(x,s)D^-(s,\xi)ds\right]\alpha(\xi)d\xi\\
	%&-\int_0^x\left[\Lambda^-N_x(x,\xi)+N_\xi(x,\xi)\Lambda^-+\Sigma^{-+}M(x,\xi)+\Sigma^{--}N(x,\xi)\right]\beta(\xi)d\xi
%\end{align*}
%which yields
the following kernel equations

{\underline{for $1\leq i \leq n$, $1\leq j \leq m$}}
\begin{align}
	\lambda_i \partial_x M_{ij}(x,\xi) - \mu_j \partial_\xi M_{ij}(x,\xi) =
	\sum\limits_{k=1}^n\sigma^{++}_{ik}M_{kj}(x,\xi)+\sum\limits_{p=1}^m \sigma^{+-}_{ip}N_{pj}(x,\xi)\label{eq:developedKernelM}
	\end{align}
	{\underline{for $1\leq i \leq m$, $1\leq j \leq m$}}
	\begin{align}
	\mu_i \partial_x N_{ij}(x,\xi) + \mu_j \partial_\xi N_{ij}(x,\xi) = \sum\limits_{k=1}^n\sigma^{-+}_{ik} M_{kj}(x,\xi)+\sum\limits_{p=1}^m \sigma^{--}_{ip}N_{pj}(x,\xi)\label{eq:developedKernelN}
\end{align}
along with the following set of boundary conditions
\begin{align}
\forall 1&\leq i \leq m, \ 1\leq j \leq n&
	M_{ij} (x,x)&= \cfrac{\sigma^{+-}_{ij}}{\mu_i+\lambda_j} \stackrel{\Delta}{=}m_{ij} \label{eq:hypotenuseM}\\
	%\end{multline}
	%\begin{align}
	\forall 1& \leq i,j \leq m, \ i \neq j,& N_{ij} (x,x)&= 0\label{eq:hypotenuseN}\\
	\intertext{besides, evaluating~\eqref{eq:BS1},\eqref{eq:BS2} at~$x=1$ yields}
	 \forall 1&\leq j \leq i \leq m &	N_{ij}(1,x) &=\sum\limits_{k=1}^n \rho_{ik} M_{kj}(1,x)\label{eq:modifx1boundaryobs}
%\end{align}
\intertext{
To ensure well-posedness of the kernel equations, 
%(\ref{eq:hypotenuseK})-(\ref{eq:modifx0boundary}) can not guarantee the well-posedness of the system  (\ref{eq:developedKernelK})-(\ref{eq:developedKernelL}), since we may lose some information for the competents $L_{ij}(i>j)$. In order to overcome this difficulty, one can design
we add the following artificial boundary conditions for $N_{ij}(i<j)$}
%\begin{align}
\forall 1& \leq i<j \leq m, & N_{ij}(x,0)&=0 \label{eq:artificialboundaryobs}
\end{align}
{while the~$d^+_{ij}$,~$d^-_{ij}$ and $h_{ij}$ are given by}
\begin{align}
	h_{ij}(x)&= N_{ij}(1,x) -\sum\limits_{k=1}^n \rho_{ik} M_{kj}(1,x)
	\end{align}
	\begin{align}
	d^+_{ij}(x,\xi)=-\sum\limits_{k=1}^m M_{ik}(x,\xi)\sigma^{-+}_{kj}
	+\int_\xi^x \sum\limits_{k=1}^m M_{ik}(x,s)d_{kj}^-(s,\xi)ds
		\end{align}
		\begin{align}
	d^-_{ij}(x,\xi)=-\sum\limits_{k=1}^m N_{ik}(x,\xi)\sigma^{-+}_{kj}+\int_\xi^x \sum\limits_{k=1}^m N_{ik}(x,s)d_{kj}^-(s,\xi)ds
\end{align} 
provided the~$M$ and~$N$ kernels are properly defined. Interestingly, the well-posedness of the system of kernel equations of the observer~\eqref{eq:developedKernelM}--\eqref{eq:artificialboundaryobs} is equivalent to that of the controller kernels~\eqref{eq:developedKernelK}--\eqref{eq:artificialboundary}. Indeed, considering the following alternate variables
\begin{align}
			\bar{M}_{ij}(\chi,y)&=M_{ij}(1-y,1-\chi)=M_{ij}(x,\xi),\\
			\bar{N}_{ij}(\chi,y)&=N_{ij}(1-y,1-\chi) = N_{ij}(x,\xi)
	\end{align}
	yields
	
	{\underline{for $1\leq i \leq n$, $1\leq j \leq m$}}
\begin{align}
	\mu_j \partial_\chi \bar{M}_{ij}(\chi,y) - \lambda_i \partial_y \bar{M}_{ij}(\chi,y) =
	-\sum\limits_{k=1}^n\sigma^{++}_{ik}\bar{M}_{kj}(\chi,y)-\sum\limits_{p=1}^m \sigma^{+-}_{ip}\bar{N}_{pj}(\chi,y)
	\end{align}
	{\underline{for $1\leq i \leq m$, $1\leq j \leq m$}}
	\begin{align}
	\mu_j \partial_\chi \bar{N}_{ij}(\chi,y) + \mu_i \partial_y \bar{N}_{ij}(\chi,y) =
	-\sum\limits_{k=1}^n\sigma^{-+}_{ik} \bar{M}_{kj}(\chi,y)-\sum\limits_{p=1}^m \sigma^{--}_{ip}\bar{N}_{pj}(\chi,y)
\end{align}
along with the following set of boundary conditions
\begin{align}
1 \leq i \leq m, \ 1\leq j \leq n \
	\bar{M}_{ij} (\chi,\chi)&= \cfrac{\sigma^{+-}_{ij}}{\mu_i+\lambda_j} \stackrel{\Delta}{=}m_{ij}\\
	%\end{align}
	%\begin{align}
	\forall 1\leq i,j \leq m, \ i \neq j, \quad \bar{N}_{ij} (\chi,\chi)&= 0\\
	\forall 1\leq j \leq i \leq m, \quad	\bar{N}_{ij}(\chi,0) &=\sum\limits_{k=1}^n \rho_{ik} \bar{M}_{kj}(\chi,0) \\
%\end{align}
%\intertext{
%and the ``free'' boundary condition becomes}
%\begin{align}
\forall 1\leq i<j \leq m, \quad \bar{N}_{ij}(1,y)&=0 
%\end{align}
%\intertext{while the~$h_{ij}$, for $1\leq i<j \leq n$, are given by}
%%\begin{align}
	%h_{ij}(x)&= \bar{N}_{ij}(1,x) -\sum\limits_{k=1}^n \rho_{ik} \bar{M}_{kj}(1,x)
\end{align} 
which has the exact same structure as the controller kernel system, the well-posedness of which is assessed in Theorem~\ref{the:wellposednessKandL}.
\subsection{Output feedback controller}
The estimates can be used in an observer-controller scheme to derive an output feedback law yielding finite-time stability of the zero equilibrium. More precisely, we have the following Lemma.
\begin{Lemma}\label{lem:outputfeedback}
	Consider the system composed of the original~\eqref{eq:system1}--\eqref{eq:systemBC} and target systems~\eqref{eq:obs1}--\eqref{eq:obsBC} with the following control law
	\begin{align}
			U(t)&=\int_0^1 \left[K(1,\xi)\hat{u}(\xi) + L(1,\xi)\hat{v}(\xi)\right]d\xi-R_1\hat{u}(t,1)
	\end{align}
	where~$K$ and~$L$ are defined by~\eqref{eq:developedKernelK}--\eqref{eq:artificialboundary}. Its solutions~$(u,v,\hat{u},\hat{v})$ converge in finite time to zero.
\end{Lemma}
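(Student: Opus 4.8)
The plan is to exploit the cascade structure of the observer-controller interconnection: the observation error $(\tilde u,\tilde v)=(u-\hat u,v-\hat v)$ obeys the autonomous error system \eqref{eq:errorobs1}--\eqref{eq:errorobsBC}, which does not depend on the plant state or the control; meanwhile the controller acts on the plant using the estimates $(\hat u,\hat v)=(u-\tilde u,v-\tilde v)$. So I would first establish finite-time convergence of the error, then feed this into the closed-loop plant equations and establish finite-time convergence of the plant state. Concretely, the steps are as follows.

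\emph{Step 1: finite-time decay of the error system.} I would invoke the backstepping transformation \eqref{eq:BS1}--\eqref{eq:BS2} with kernels $M,N$, whose well-posedness is guaranteed by the change of variables to $\bar M,\bar N$ and Theorem~\ref{the:wellposednessKandL}. This maps the error system onto the observer target system \eqref{eq:targetobs1}--\eqref{eq:targetobsBC}, and Proposition~\ref{prop:stabtargetObs} gives $\tilde\alpha\equiv\tilde\beta\equiv 0$ for $t\geq t_F$. Since \eqref{eq:BS1}--\eqref{eq:BS2} is a Volterra equation of the second kind, it is boundedly invertible (as in \eqref{final kernel}--\eqref{invertible}), so $(\tilde u,\tilde v)\equiv 0$ for $t\geq t_F$ as well.

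\emph{Step 2: finite-time decay of the plant under the output-feedback law.} For $t\geq t_F$ the error vanishes, hence $\hat u(t,\cdot)=u(t,\cdot)$ and $\hat v(t,\cdot)=v(t,\cdot)$, so the control law $U(t)=\int_0^1[K(1,\xi)\hat u(\xi)+L(1,\xi)\hat v(\xi)]d\xi-R_1\hat u(t,1)$ coincides with the full-state feedback law \eqref{eq:controlLaw} for all $t\geq t_F$. Restarting the clock at $t=t_F$ with initial data $(u(t_F,\cdot),v(t_F,\cdot))\in(L^\infty(0,1))^{n+m}$, Theorem~\ref{the:main} applies and yields that $(u,v)$ reaches zero by time $t=2t_F$. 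The same backstepping transformation \eqref{eq:BS}/\eqref{final kernel} and its inverse \eqref{invertible} then force $(\hat u,\hat v)=(u-\tilde u,v-\tilde v)\equiv 0$ for $t\geq 2t_F$, so all four components $(u,v,\hat u,\hat v)$ vanish in finite time, which proves the Lemma.

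\emph{Main obstacle.} The only subtlety worth handling carefully is \emph{well-posedness and regularity of the interconnected system on $[0,t_F]$}: one must check that the plant, driven by the bounded-but-nonzero control $U$ built from the estimates, together with the observer dynamics, has a well-defined $L^\infty$ solution on $[0,t_F]$ so that the "restart at $t_F$" argument makes sense — this follows from the method of characteristics and a standard fixed-point/successive-approximations argument for coupled hyperbolic systems with bounded coefficients and bounded boundary feedback, but it should be invoked explicitly. The cascade argument itself is then routine; the heavy lifting (well-posedness of the kernel equations) has already been discharged by Theorem~\ref{the:wellposednessKandL}.
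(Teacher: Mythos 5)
Your proof is correct and follows essentially the same two-step cascade argument as the paper: finite-time decay of the error states via Proposition~\ref{prop:stabtargetObs} and the observer backstepping transformation \eqref{eq:BS1},\eqref{eq:BS2}, then finite-time decay of the remaining states by Theorem~\ref{the:main} after restarting the clock at $t=t_F$, giving convergence of all four components by $t=2t_F$. The one small difference is immaterial bookkeeping --- you apply Theorem~\ref{the:main} to the plant $(u,v)$ once the output-feedback law coincides with the full-state feedback law, whereas the paper applies it to the observer system $(\hat u,\hat v)$ once its injection terms vanish and then recovers $(u,v)=(\hat u+\tilde u,\hat v+\tilde v)\equiv 0$; your explicit remark about well-posedness of the interconnected system on $[0,t_F]$ is a valid point the paper leaves implicit.
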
	
\begin{Proof}
	Proposition~\ref{prop:stabtargetObs} along with the existence of the observer backstepping transformation~\eqref{eq:BS1},\eqref{eq:BS2} yields convergence of the observer error states~$\tilde{u}$, $\tilde{v}$ defined by~\eqref{eq:errorobs1}--\eqref{eq:errorobsBC} to zero for~$t\geq t_F$\footnote{the proof of this claim follows the exact same steps as in the controller case, see Section~\ref{sec:main}}. Therefore, for~$t\geq t_F$, one has~$v(t,0)=\hat{v}(t,0)$ and Theorem~\ref{the:main} applies to the observer system~\eqref{eq:obs1}--\eqref{eq:obsBC}. Therefore, for~$t\geq 2t_F$, one has~$(\tilde{u},\tilde{v},\hat{u},\hat{v})\equiv 0$ which also yields~$(u,v)\equiv 0$.
\end{Proof}

\section{Motion planning for homodirectional systems}\label{sec:motion_planning}
\subsection{Definition of the motion planning problem}
Consider now the case $n=0$. Then system (\ref{eq:system1})--(\ref{eq:systemBC}) reduces to
\begin{align}
	v_t (t,x) - \Lambda^- v_x (t,x) &= \Sigma^{--} v(t,x),\label{eq:system2homo}
\end{align}
where coefficients $ \Lambda^-$ and $\Sigma^{--}$ defined as in (\ref{eq:coefLambda}) and (\ref{eq:coefSigma}), with boundary conditions
\begin{align}
v(t,1)&= U(t) \label{eq:systemBChomo}.
\end{align}
For simplicity in this section we drop the super-indices in the coefficients.

Equation (\ref{eq:system2homo}) represents a system of $m$ states moving in the same direction (in this case, from right to left). We call such a system \emph{homodirectional} (in oposition with \emph{heterodirectional} systems, whose states move in different directions, such as (\ref{eq:system1})--(\ref{eq:systemBC}) with $n,m\neq0$). Homodirectional systems are inherently finite-time stable. Physically, this is due to the fact that they are transport equations with information  flowing only in one direction; thus, setting $U(t)$ to zero in (\ref{eq:systemBChomo}) and solving the equations with the method of characteristics, we obtain $u(t,x)\equiv0$ for $t\geq\frac{1}{\mu_m}$ (the slowest transport time in (\ref{eq:system2homo})).

For (\ref{eq:system2homo})--(\ref{eq:systemBChomo}) we consider the following \emph{motion planning} problem. Given $\Phi(t)$, a known function defined as
\begin{align}
	\Phi(t)&=\begin{pmatrix}
		\Phi_1(t)&\cdots&\Phi_n(t)
	\end{pmatrix}^T,
\end{align}
find the value of $U(t)$ so that $v(t,0)\equiv\Phi(t)$ for $t\geq t_M$, for some $t_M>0$.
\begin{rmk}
Even though the plant  (\ref{eq:system2homo}) is finite-time stable, and a formula for the states can be written by using the method of characteristics, the motion planning problem is not trivial to solve. The entanglement of different states moving with different speeds severely complicates finding a solution. This design difficulty will be explicitly shown with an example in Section~\ref{sec:example}.
\end{rmk}
\subsection{Tracking control design}
The following result solves the motion planning problem.
\begin{theorem}\label{th:motionplanning}
Consider system (\ref{eq:system2homo}) with boundary conditions (\ref{eq:systemBChomo}), initial condition~$v_0\in (L^2(0,1))^{m}$, and feedback control law
\begin{align}\label{feedback law homo}
 U_i(t) =\Phi_i\left(t+\frac{1}{\mu_i}\right)+ \sum_{j=1}^{i=m} \int_0^1 L_{ij}(1,\xi)v_j(\xi)d\xi
-\sum_{j=1}^{i-1} \int_0^1 \frac{\mu_j}{\mu_i} L_{ij}(\xi,0) \Phi_j\left(t+\frac{1-\xi}{\mu_i}\right) d\xi
\end{align}
Then, $v(t,0)\equiv\Phi(t)$ if $t\geq t_M$, for $t_M=\sum_{j=1}^m \frac{1}{\mu_j}$.
%for every $\lambda>0$, there exists $c>0$ such that
%\begin{align}
%\left\|\bigg(\begin{array}{c}u(t,\cdot)\\ v(t,\cdot) \end{array}\bigg)\right\|_{L^2}\leq c \mathrm{e}^{-\lambda t} \left\|\bigg(\begin{array}{c}u_0\\ v_0 \end{array}\bigg)\right\|_{L^2}.
%\end{align} 
\end{theorem}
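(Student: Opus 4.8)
The plan is to recognise the homodirectional plant as the $n=0$ instance of the backstepping construction of Section~\ref{sec:transformation}, to solve the resulting triangular target system explicitly, and to read the tracking control off the boundary data that the target system requires. For $n=0$ the Volterra transformation \eqref{eq:BS} reduces to $\beta(t,x)=v(t,x)-\int_0^x L(x,\xi)v(\xi)\,d\xi$, and its invertibility together with the boundedness of the traces $L_{ij}(\cdot,0)$ and $L_{ij}(1,\cdot)$ are exactly what Theorem~\ref{the:wellposednessKandL} provides (the $K$-kernel is simply absent). This transformation maps \eqref{eq:system2homo}--\eqref{eq:systemBChomo} onto the cascade $\beta_t-\Lambda^-\beta_x=G(x)\beta(t,0)$, where $G$ is strictly lower triangular and, by \eqref{eq:gijfunctionLij} with $n=0$, $g_{ij}(x)=\mu_j L_{ij}(x,0)$. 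Since \eqref{eq:BS} evaluated at $x=0$ gives $\beta(t,0)=v(t,0)$, it suffices to make $\beta(t,0)\equiv\Phi(t)$ for $t\geq t_M$; here $\beta(t,1)$ is the control handle, tied to the physical input by $U(t)=v(t,1)=\beta(t,1)+\int_0^1 L(1,\xi)v(\xi)\,d\xi$, the integral being the middle term of \eqref{feedback law homo}, so the remaining task is to identify the correct $\beta(t,1)$.

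\emph{Main step: explicit solution and induction.} Integrating the $i$-th component of the target equation along the characteristic through $(t,0)$ — whose foot on $x=1$ is at time $t-1/\mu_i$ — gives, for $t\geq 1/\mu_i$,
\[
\beta_i(t,0)=\beta_i\!\left(t-\tfrac{1}{\mu_i},1\right)+\frac{1}{\mu_i}\int_0^1\sum_{j<i}g_{ij}(\eta)\,\beta_j\!\left(t-\tfrac{\eta}{\mu_i},0\right)d\eta .
\]
I would then prove by induction on $i$ that, with the choice $\beta_i(s,1):=\Phi_i(s+1/\mu_i)-\tfrac{1}{\mu_i}\sum_{j<i}\int_0^1 g_{ij}(\xi)\Phi_j(s+(1-\xi)/\mu_i)\,d\xi$ for $s\geq 0$, one has $\beta_i(t,0)=\Phi_i(t)$ for all $t\geq T_i:=\sum_{k=1}^i 1/\mu_k$. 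The base case $i=1$ has no forcing: $\beta_1(t,0)=\beta_1(t-1/\mu_1,1)=\Phi_1(t)$ once $t\geq T_1$. For the step, fix $t\geq T_i=T_{i-1}+1/\mu_i$; then $t-\eta/\mu_i\geq T_{i-1}\geq T_j$ for all $\eta\in[0,1]$ and $j<i$, so the induction hypothesis replaces $\beta_j(t-\eta/\mu_i,0)$ by $\Phi_j(t-\eta/\mu_i)$ in the integral, and since $t-1/\mu_i\geq T_{i-1}\geq 0$ the prescribed value of $\beta_i(t-1/\mu_i,1)$ is in force; the correction term built into $\beta_i(\cdot,1)$ then cancels the forcing integral exactly, leaving $\beta_i(t,0)=\Phi_i(t)$.

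\emph{Conclusion and main obstacle.} Since $T_i\leq T_m=t_M$ for every $i$, the induction gives $\beta(t,0)\equiv\Phi(t)$, hence $v(t,0)\equiv\Phi(t)$, for $t\geq t_M$. Substituting $g_{ij}(\xi)=\mu_j L_{ij}(\xi,0)$ into $\beta_i(t,1)$ and adding $\int_0^1 L(1,\xi)v(\xi)\,d\xi$ recovers \eqref{feedback law homo} verbatim. Well-posedness of the closed loop — the boundary condition $v(t,1)=U(t)$ being nonlocal in $v$ — follows, as in the proof of Theorem~\ref{the:main}, from invertibility of the Volterra transformation together with well-posedness of the strictly triangular target system forced by the known bounded signal $\Phi$. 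Once Theorem~\ref{the:wellposednessKandL} is granted, the only genuinely delicate point is the accounting in the induction step: one must verify that for $t\geq T_i$ all shifted arguments $t-\eta/\mu_i$ land in the region where the induction hypothesis and the boundary-control formula already hold, which is exactly what makes the finite-time thresholds $T_i$ telescope up to $t_M$.
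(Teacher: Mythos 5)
Your proposal is correct and follows essentially the same route as the paper: reduce to the cascade target system via the $n=0$ backstepping transformation, use $\beta(t,0)=v(t,0)$, integrate along characteristics to get the explicit forced solution, and recursively (inductively) choose the boundary data $\beta_i(\cdot,1)$ so that the forcing from lower-index components cancels and $\beta_i(t,0)=\Phi_i(t)$ once $t\geq T_i=\sum_{k\leq i}1/\mu_k$. The only cosmetic difference is that you package the recursion as a clean induction with thresholds $T_i$, whereas the paper does $i=1,2$ explicitly and asserts the pattern continues; you also briefly note closed-loop well-posedness, which the paper leaves implicit.
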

\begin{rmk}
The motion planning problem has been solved for the homodirectional case for the sake of clarity. However, it can be formulated for the full heterodirectional system~\eqref{eq:system1}--\eqref{eq:systemBC} with only minor modifications. Noting $u(t,0)= Q_0 v(t,0)$, the values of some $u_i$'s could be chosen as part of the output instead of some of the $v_i$'s,  for a total of $m$ states. The only condition would be that all the rows of the output vector (written in terms of the $v_i$'s) are linearly independent.
\end{rmk}
\begin{proof}
We start by using the backstepping transformation (\ref{eq:BS})---where the kernels $K$ are zero due to $n$ being zero---to map (\ref{eq:system2homo})--(\ref{eq:systemBChomo}) into the target system
\begin{align}
	\beta_t (t,x) - \Lambda \beta_x (t,x) &=   G(x) \beta(0),\label{eq:target2homo}
\end{align}
where $G(x)$ was defined in Section~\ref{sec:transformation} as a function of the kernels,
with the following boundary conditions
\begin{align}
\beta(t,1)&= B(t),\label{eq:targetBChomo}
\end{align}
where $B(t)$ in (\ref{eq:targetBChomo}) is a function defined as
\begin{align}
	B(t)&=\begin{pmatrix}
		B_1(t)&\cdots&B_n(t)
	\end{pmatrix}^T,
\end{align}
with components to be determined. $B$ represents an extra degree of freedom that did not appear in the target system for the homodirectional control problem (Equation~\ref{eq:targetBC}). It will be used to solve the motion planning problem. The presence of $B(t)$ in the boundary conditions does not change the backstepping transformation; however it modifies the feedback control law to
\begin{align}\label{eq:controlmotion}
 U(t) = B(t)+\int_0^1 L(1,\xi)v(\xi)d\xi.
\end{align}
Now, noticing that if one sets $x=0$ in the transformation (\ref{eq:BS}) one obtains $v_i(t,0)=\beta_i(t,0)$, it is clear that we only need to solve the motion planning problem for the target $\beta$ system by using $B(t)$. The next steps of the proof are devoted to finding the value of $B(t)$. 

Using the method of characteristics, the explicit solution for each state $\beta_i(t,x)$ of (\ref{eq:targetBChomo}) with boundary condition (\ref{eq:targetBChomo}) at time $t\geq\frac{1-x}{\mu_i}$ is
\begin{align}
\beta_i(t,x)=B_i\left(t+\frac{x-1}{\mu_i}\right)
+\frac{1}{\mu_i}\int_x^1 G(\xi) \beta\left(t+\frac{x-\xi}{\mu_i},0\right) d\xi, \label{eq:betai}
\end{align}
Using (\ref{matrixG})
%can be written as
%\begin{align}
%\beta_i(t,x)=B_i\left(t+\frac{x-1}{\mu_i}\right)+\frac{1}{\mu_i}\sum_{j=1}^{i-1} \int_x^1 g_{ij}(\xi) \beta_j\left(t+\frac{x-\xi}{\mu_i},0\right) d\xi ,\label{eq:betai2}
%\end{align}
%or, as a function of the kernels, using
and (\ref{eq:gijfunctionLij}) in (\ref{eq:betai}), we obtain
\begin{align}
\beta_i(t,x)=B_i\left(t+\frac{x-1}{\mu_i}\right)+\sum_{j=1}^{i-1} \int_x^1 \frac{\mu_j}{\mu_i} L_{ij}(\xi,0) \beta_j\left(t+\frac{x-\xi}{\mu_i},0\right) d\xi.\label{eq:betai3}
\end{align}
%The reason to consider a time greater than $\frac{1-x}{\mu_i}$ is to avoid the effect of the initial conditions $\beta_i(0,x)$, which have been already transported away from the problem domain at that time.

To solve now the motion planning problem, consider first (\ref{eq:betai3}) for $i=1$ and $x=0$, for $t\geq\frac{1}{\mu_1}$. Imposing $\beta_1(t,0)=\Phi_1(t)$, we obtain:
\begin{align}
\Phi_1(t)=B_1\left(t-\frac{1}{\mu_1}\right),
\end{align}
thus, setting
%\begin{align}
$B_1(t)=\Phi_1 \left(t+\frac{1}{\mu_1}\right)$ for $t\geq0$,
%\end{align}
we obtain the desired behavior for $\beta_1(t,0)$ for $t\geq\frac{1}{\mu_1}$.
Now consider (\ref{eq:betai3}) for $i=2$ and $x=0$, for $t\geq\frac{1}{\mu_2}$. Imposing $\beta_2(t,0)=\Phi_2(t)$, we obtain:
\begin{align}
\Phi_2(t)=B_2\left(t-\frac{1}{\mu_2}\right)+ \int_0^1 \frac{\mu_2}{\mu_1} L_{21}(\xi,0) \beta_1\left(t-\frac{\xi}{\mu_2},0\right) d\xi.\label{eq:beta2}
\end{align}
%If $t\geq\frac{1}{\mu_1}+\frac{1}{\mu_2}$ then we can substitute $\beta_1(t,0)$ in the whole domain of the integral in (\ref{eq:beta2}) by $\Phi_1(t)$. Thus
%\begin{align}
%\Phi_2(t)=B_2\left(t-\frac{1}{\mu_2}\right)+ \int_0^1 \frac{\mu_1}{\mu_2} L_{21}(\xi,0) \Phi_1\left(t-\frac{\xi}{\mu_2}\right) d\xi,
%\end{align}
%and 
Solving for $B_2$ as before
\begin{align}\label{eq:beta3}
B_2\left(t\right)=\Phi_2\left(t+\frac{1}{\mu_2}\right)- \int_0^1 \frac{\mu_1}{\mu_2} L_{21}(\xi,0)  \beta_1\left(t+\frac{1-\xi}{\mu_2},0\right) d\xi.
\end{align}
To be able to substitute $\beta_1(t,0)$ for $\Phi_1(t)$ in the whole domain of the integral in (\ref{eq:beta3}) we need to wait until $t=\frac{1}{\mu_1}$. Thus choosing
\begin{align}\label{eq:beta4}
B_2\left(t\right)=\Phi_2\left(t+\frac{1}{\mu_2}\right)- \int_0^1 \frac{\mu_1}{\mu_2} L_{21}(\xi,0)  \Phi_1\left(t+\frac{1-\xi}{\mu_2},0\right) d\xi,
\end{align}
we get that $\beta_2(t,0)=\Phi_2(t)$ for $t\geq\frac{1}{\mu_1}+\frac{1}{\mu_2}$ (as we have to wait an extra $\frac{1}{\mu_2}$ time for (\ref{eq:beta4}) to propagate).
It is clear that this procedure can be continued for $i=3,\hdots,m$. Thus we obtain that
\begin{align}
B_i\left(t\right)=\Phi_i\left(t+\frac{1}{\mu_i}\right)-\sum_{j=1}^{i-1} \int_0^1 \frac{\mu_j}{\mu_i} L_{ij}(\xi,0) \Phi_j\left(t+\frac{1-\xi}{\mu_i}\right) d\xi
\label{eqn:Bivalues}
\end{align}
solves the motion problem for $\beta_i$ for $t\geq \sum_{j=1}^i \frac{1}{\mu_j}$. Applying (\ref{eqn:Bivalues}) for $i=1,\hdots,m$ and substituting in (\ref{eq:controlmotion}) produces the feedback law (\ref{feedback law homo}), thus solving the motion planning problem in time $t_M=\sum_{j=1}^m \frac{1}{\mu_j}$.
\end{proof}
\begin{rmk}
Theorem~\ref{th:motionplanning} gives in fact \emph{tracking} (in finite-time) of the desired output signal, a result stronger than pure motion planning. To obtain a pure motion planning result, one should take (\ref{eq:betai3})---the explicit solutions of the target system obtained in the proof of the theorem---and substitute the values of $B_i$ found in (\ref{eqn:Bivalues}), so that the $\beta_i$'s are explicit functions of the $\Phi_i$'s. Then, using the inverse backstepping transformation (\ref{invertible}), find the $v_i$'s as explicit functions of the $\Phi_i$'s and substitute them in the control law (\ref{feedback law homo}), which would then be an exclusive function of the outputs. We omit this result for lack of space.
\end{rmk}
\subsection{An explicit motion planning example}\label{sec:example}
Next we present an specific example of a motion planning problem for $m=2$. Consider the plant
\begin{align}
	v_{1t} (t,x) - \mu_1 v_{1x} (t,x) &= \sigma_{12} v_2(t,x),\label{equ:v1}\\
	v_{2t} (t,x) - \mu_2 v_{2x} (t,x) &= \sigma_{21} v_1(t,x),\label{equ:v2}
\end{align}
with boundary conditions
\begin{align}
v_1(t,1)&= U_1(t), \quad v_2(t,1)=U_2.\label{equ:bcv1v2}
\end{align}
The objective is to design $U_1(t)$ and $U_2(t)$ so that $v_1(t,0)=\Phi_1(t)$ and $v_2(t,0)=\Phi_2(t)$ for some functions $\Phi_1,\Phi_2$ for $t\geq t_M$. Notice that since (\ref{equ:v1})--(\ref{equ:bcv1v2}) is explicitly solvable, one might think that the inputs can be directly designed. Using the method of characteristics to explicitly write a solution of the system, one gets, after time $t=\frac{1}{\mu_2}$,
\begin{align}\label{eqn-utphi1}
v_1(t,0)&=
U_1\left(t-\frac{1}{\mu_1}\right) 
+\frac{1}{\mu_1}
\int_0^{1}\sigma_{12} v_2 \left(t-\frac{\xi}{\mu_1},\xi \right)d\xi, \\
v_2(t,0)&=
U_2\left(t-\frac{1}{\mu_2}\right) 
+\frac{1}{\mu_2}
\int_0^{1}\sigma_{21} v_1 \left(t-\frac{\xi}{\mu_2},\xi \right)d\xi. \label{eqn-utphi2}
\end{align}
However, if one tries to proceed as in the proof of Theorem~\ref{th:motionplanning}, by plugging in $\Phi_1(t)$ in (\ref{eqn-utphi1}) and $\Phi_2(t)$ in (\ref{eqn-utphi2}), and then solve for $U_1(t)$ and $U_2(t)$, one ends up with a feedback law that requires knowing \emph{future} values of $v_1$ and $v_2$, i.e., a \emph{non-causal} (and therefore not implementable) feedback law. Thus, a direct approach does not work even for the $m=2$ case. To solve the motion planning problem, we resort to Theorem~\ref{th:motionplanning}; in this particular case, the motion planning problem is solved by the inputs
\begin{align}\label{feedback law homo example}
 U_1(t) =\Phi_1\left(t+\frac{1}{\mu_1}\right)+ \int_0^1 L_{11}(1,\xi)v_1(\xi)d\xi+ \int_0^1 L_{12}(1,\xi)v_2(\xi)d\xi,
\end{align}
\begin{align}
 U_2(t) =\Phi_2\left(t+\frac{1}{\mu_2}\right)
- \int_0^1 \frac{\mu_1}{\mu_2} L_{21}(\xi,0) \Phi_1\left(t+\frac{1-\xi}{\mu_2}\right) d\xi+  \int_0^1 L_{21}(1,\xi)v_1(\xi)d\xi+ \int_0^1 L_{22}(1,\xi)v_2(\xi)d\xi,
\end{align}
where the kernels $L_{11}$, $L_{12}$, $L_{21}$ and $L_{22}$ satisfy
\begin{align} \label{eqn-ku1}
\mu_1 \partial_x L_{11}(x,\xi)+\mu_1 \partial_\xi L_{11}(x,\xi)&=
\sigma_{21}  L_{12}(x,\xi)\\
\mu_1 \partial_x L_{12}(x,\xi)+\mu_2 \partial_\xi L_{12}(x,\xi)&=\sigma_{12} L_{11}(x,\xi) ,\\
\mu_2 \partial_x L_{21}(x,\xi)+\mu_1 \partial_\xi L_{21}(x,\xi)&=
\sigma_{21} L_{22}(x,\xi)
\\
\mu_2 \partial_x L_{22}(x,\xi)+\mu_2 \partial_\xi L_{22}(x,\xi)&=\sigma_{12} L_{21}(x,\xi)
,
\end{align}
with boundary conditions
\begin{align}
 L_{11}(x,0)&= L_{12}(x,0)=L_{22}(x,0)=0,\\
L_{12}(x,x) &=\frac{\sigma_{12}}{\mu_2-\mu_1},\quad
L_{21}(x,x)=\frac{\sigma_{21}}{\mu_1-\mu_2}, \label{eq:bcl21}
\end{align}
plus the artificial boundary condition $L_{21}(1,\xi)=l_{21}(\xi)$, where the function $l_{21}$ is arbitrary. These kernel PDEs can be explicitly solved using techniques akin to those used in~\cite{Vazquez2014}. The resulting kernels (whose validity can be verified by substitution in the kernel equations) are given by~\eqref{eq:longkernels1}--\eqref{eq:longkernelsend}.
\begin{figure*}%[htb]
\hrule
\footnotesize
\begin{align}
L_{11}(x,\xi)=& \begin{cases}
	 \dfrac{\sqrt{\sigma_{12}\sigma_{21}}}{\mu_2-\mu_1}
\sqrt{\dfrac{\mu_1 \xi -\mu_2 x}{\mu_1(x - \xi)}}
\mathrm{I}_1 \left( 
\dfrac{2}{\mu_1 - \mu_2}\sqrt{\dfrac{\sigma_{12}\sigma_{21}(x - \xi)(\mu_1 \xi - \mu_2 x)}{\mu_1}}
\right),& \xi\geq \frac{\mu_2}{\mu_1}x \\ 0, & \xi< \frac{\mu_2}{\mu_1}x 
\end{cases} \label{eq:longkernels1}\\
L_{12}(x,\xi)=& \begin{cases}
	\dfrac{\sigma_{21}}{\mu_2-\mu_1}
\mathrm{I}_0\left( 
\dfrac{2}{\mu_1 - \mu_2}\sqrt{\dfrac{\sigma_{12}\sigma_{21}(x - \xi)(\mu_1 \xi - \mu_2 x)}{\mu_1}}
\right),& \xi\geq \frac{\mu_2}{\mu_1}x \\ 0, & \xi< \frac{\mu_2}{\mu_1}x 
\end{cases}\\
L_{21}(x,\xi)=&\frac{\sigma_{21}\xi}{\mu_1 x-\mu_2 \xi} \mathrm{J}_0 \left( \frac{2}{\mu_1-\mu_2} \sqrt{\frac{\sigma_{12}\sigma_{21}(x-\xi)(\mu_1 x-\mu_2\xi)}{\mu_2}}\right)+\mu_1\sqrt{\frac{\sigma_{21} \mu_2 (x-\xi)}{\sigma_{12} (\mu_1 x - \mu_2 \xi)^3}}
\mathrm{J}_1 \left( \frac{2}{\mu_1-\mu_2} \sqrt{\frac{\sigma_{12}\sigma_{21}(x-\xi)(\mu_1 x-\mu_2\xi)}{\mu_2}}\right)
,\\
L_{22}(x,\xi)=&\xi \sqrt{\frac{\sigma_{12}\sigma_{21}}{\mu_2(x-\xi)(\mu_1 x-\mu_2\xi)}}\mathrm{J}_1 \left( \frac{2}{\mu_1-\mu_2} \sqrt{\frac{\sigma_{12}\sigma_{21}(x-\xi)(\mu_1 x-\mu_2\xi)}{\mu_2}}\right) \label{eq:longkernelsend}
\end{align}
\hrule
\end{figure*}
where $\mathrm{I}_0$ and $\mathrm{I}_1$ are the modified Bessel functions of order 0 and 1, and $\mathrm{J}_0$ and $\mathrm{J}_1$ are the (regular) Bessel functions of order 0 and 1, respectively.

The kernels appearing in (\ref{eqn-utphi1})--(\ref{eqn-utphi2}) are depicted in Fig~\ref{fig:kernels} for the case $\mu_1=1$, $\mu_2=0.2$ and $\sigma_{12}=2$, $\sigma_{21}=5$. It can be seen that $L_{11}(1,\xi)$ and $L_{12}(1,\xi)$ have a monotone behaviour (they are always negative or zero), whereas $L_{21}(1,\xi)$, $L_{21}(\xi,0)$, and $L_{22}(1,\xi)$ are oscillatory. Fig.~\ref{fig:l12} shows $L_{11}$ and $L_{12}$ in the whole domain $\mathcal{T}$; notice that $L_{12}(x,\xi)$ is discontinuous along the line $\xi=\frac{\mu_2}{\mu_1}$ (which is the lower domain on Figure~\ref{fig:char3}), whereas $L_{11}(x,\xi)$ is not discontinuous. On the other hand, it is evident that $l_{21}(\xi)=L_{21}(1,\xi)$ is rather non-trivial. In fact, the procedure that was followed to find these explicit solutions was not setting a value of $l_{21}$ a priori, but rather \emph{extending} the domain shown in Figure~\ref{fig:char2} up to $x=\frac{\mu_1}{\mu_1-\mu_2}$, so that boundary condition (\ref{eq:bcl21}) can be used to actually find the value of $l_{21}$.
\begin{figure}[h]%
\centering
\includegraphics[width=.8\columnwidth]{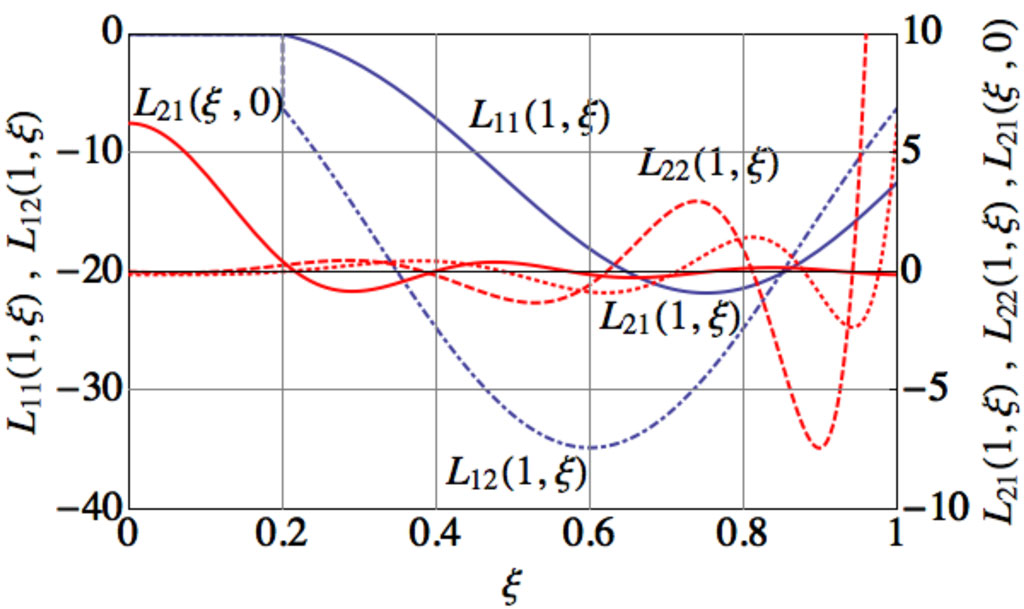}%
\caption{Motion planning kernels ($n=0$, $m=2$). Solid: $L_{11}(1,\xi)$ and $L_{21}(\xi,0)$. Dash-dotted: $L_{12}(1,\xi)$. Dotted: $L_{21}(1,\xi)$. Dashed: $L_{22}(1,\xi)$.}%
\label{fig:kernels}%
\end{figure}

\begin{figure}[h]%
\centering
\begin{subfigure}{.45\columnwidth}%
\includegraphics[width=\columnwidth]{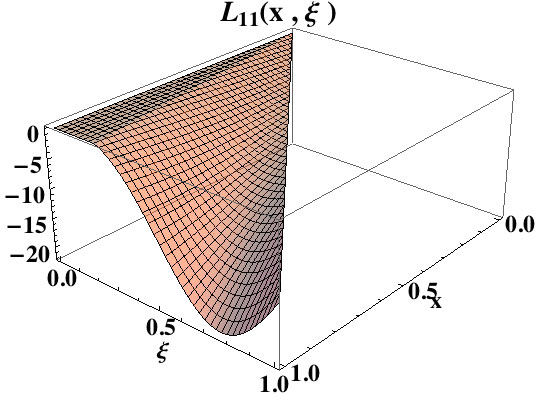}%
\end{subfigure}
\begin{subfigure}{.45\columnwidth}%
\centering
\includegraphics[width=\columnwidth]{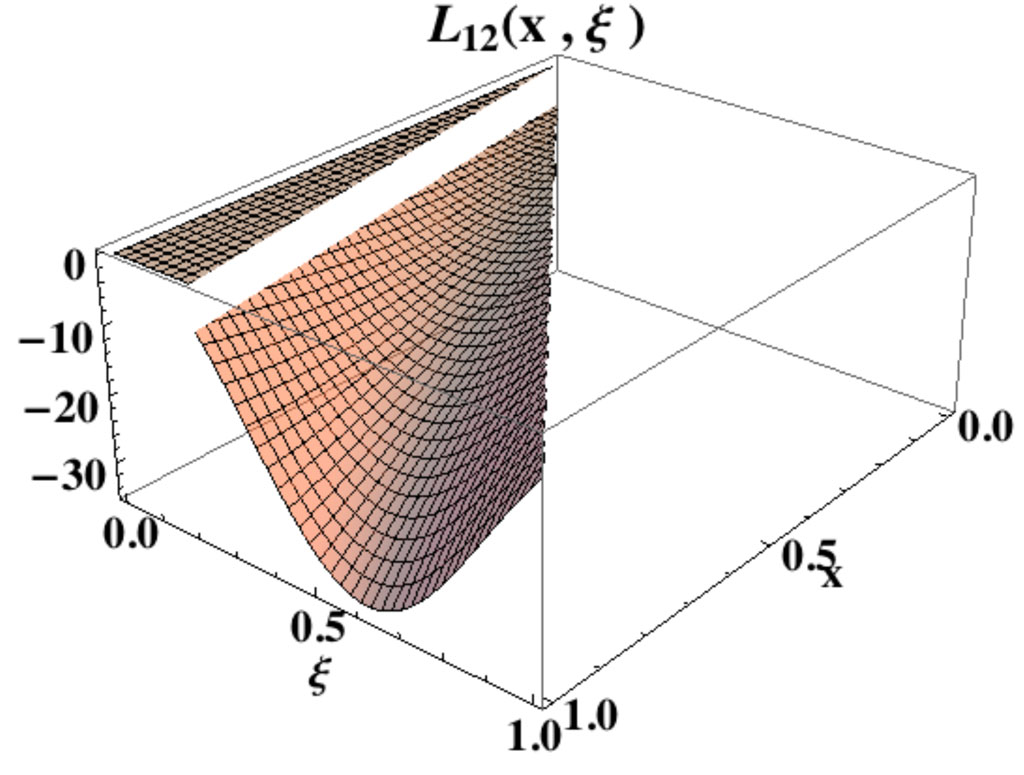}%
\end{subfigure}
\caption{Motion planning kernels $L_{11}(x,\xi)$ and $L_{12}(x,\xi)$ ($n=0$, $m=2$).}%
\label{fig:l12}%
\end{figure}
\section{Proof of Theorem~\ref{the:wellposednessKandL}: well-posedness of the kernel equations}\label{sec:wellposedness}
To prove well-posedness of the kernel equations, we classically transform them into integral equations and use the method of successive approximations.
\begin{rmk}
	Similar proofs have been derived for less general systems, e.g. in~\cite{Coron2013} or~\cite{DiMeglio2013}. The proof is more involved here due to the existence of \emph{homodirectional controlled} states, which lead to the homodirectional kernel PDEs~\eqref{eq:developedKernelL}.
\end{rmk} 
\subsection{Method of characteristics}
\subsubsection{Characteristics of the~$K$ kernels}
For each~$1\leq i \leq m,$ $1\leq j\leq n$, and~$(x,\xi) \in \mathcal{T}$, we define the following characteristic lines~$(x_{ij}(x,\xi;\cdot),\xi_{ij}(x,\xi;\cdot))$ corresponding to Equations~\eqref{eq:developedKernelK}
\begin{align}
	&\left\{\begin{aligned}
		\frac{dx_{ij}}{ds}(x,\xi;s)&=-\mu_i, \quad s\in \left[0,s_{ij}^F(x,\xi)\right]\\
		x_{ij}(x,\xi;0)&=x,\ x_{ij}(x,\xi;s_{ij}^F(x,\xi))=x_{ij}^F(x,\xi)
	\end{aligned}\right.,\label{eq:charK1}
	\end{align}
	\begin{align}
	&\left\{\begin{aligned}
		\frac{d\xi_{ij}}{ds}(x,\xi;s)&=\lambda_j, \quad s\in \left[0,s_{ij}^F(x,\xi)\right]\\
		\xi_{ij}(x,\xi;0)&=\xi,\ \xi_{ij}(x,\xi;s_{ij}^F(x,\xi))=x_{ij}^F(x,\xi)
	\end{aligned}\right.\label{eq:charK2}
\end{align}
These lines, depicted on Figure~\ref{fig:char1}, originate at the point~$(x,\xi)$ and terminate on the hypothenuse at the point~$\left(x^F_{ij}(x,\xi),x^F_{ij}(x,\xi)\right)$. The expressions of~$x_{ij}(x,\xi;s)$, $\xi_{ij}(x,\xi;s)$ $s_{ij}^F(x,\xi)$ and~$x_{ij}^F(x,\xi)$ are omitted here because of lack of space, but are straightforward to compute. 
\begin{figure}
\centering
\includegraphics[width=.75\columnwidth]{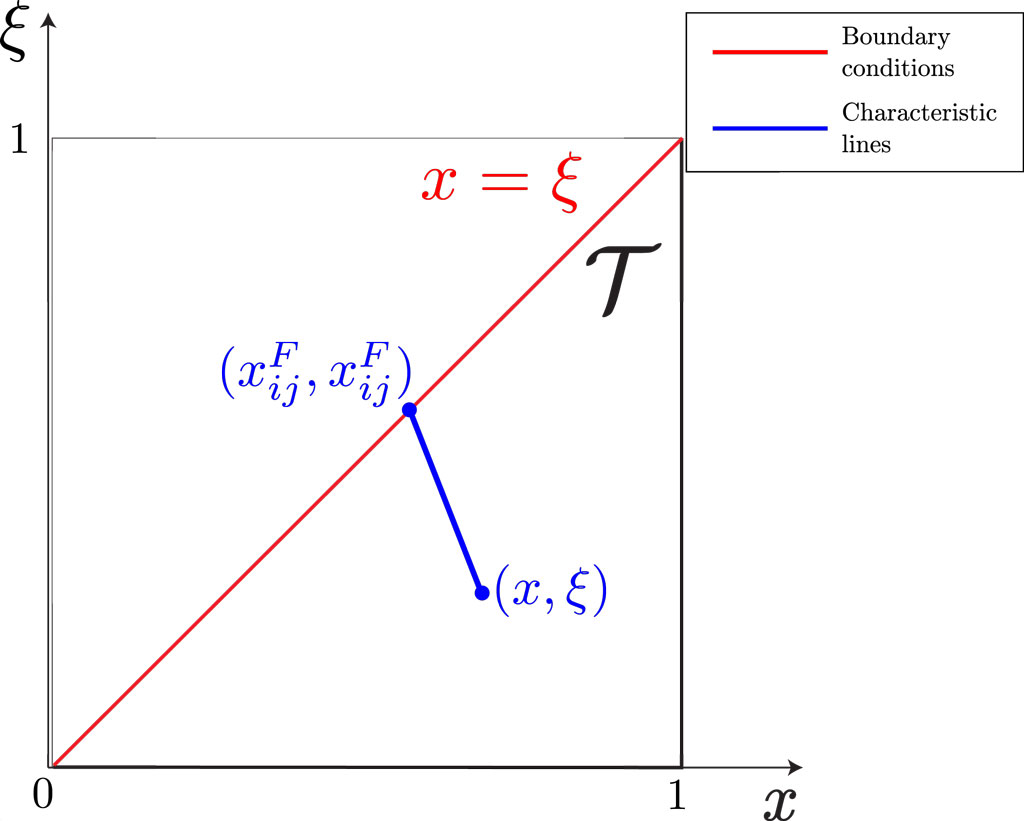}%
\caption{Characteristic lines of the $K$ kernels}%
\label{fig:char1}%
\end{figure}
Integrating~\eqref{eq:developedKernelK} along these characteristic lines and plugging in the boundary condition~\eqref{eq:hypotenuseK} yields
\begin{align}
	K_{ij}(x,\xi)=k_{ij}
	+\int_0^{s_{ij}^F(x,\xi)} \left[\sum\limits_{k=1}^n\sigma^{++}_{kj}K_{ik}(x_{ij}(x,\xi;s),\xi_{ij}(x,\xi;s))\right.
	\left.+\sum\limits_{p=1}^m \sigma^{-+}_{pj}L_{ip}(x_{ij}(x,\xi;s),\xi_{ij}(x,\xi;s))\right] ds\label{eq:integralK}
\end{align}
%where
%\begin{align}
	%s_{ij}^F(x,\xi)&=\frac{x-\xi}{\mu+\lambda},&x_{ij}^F(x,\xi) = x_{ij}(s_{ij}^F(x,\xi))& = \xi_{ij}(s_{ij}^F(x,\xi)) = \frac{\mu_i \xi + \lambda_j x}{\mu_i + \lambda_j } \label{eq:sF}
%\end{align}
\subsubsection{Characteristics of the~$L$ kernels}
For each~$1\leq i \leq m,$ $1\leq j\leq m$, and~$(x,\xi) \in \mathcal{T}$, we define the following characteristic lines~$(\chi_{ij}(x,\xi;\cdot),\zeta_{ij}(x,\xi;\cdot))$ corresponding to Equations~\eqref{eq:developedKernelL}
\begin{align}
	&\left\{\begin{aligned}
		\frac{d\chi_{ij}}{d\nu}(x,\xi;\nu)&=\epsilon_{ij}\mu_i, \quad \nu\in \left[0,\nu_{ij}^F(x,\xi)\right]\\
		\chi_{ij}(x,\xi;0)&=x,\ \chi_{ij}(x,\xi;\nu_{ij}^F(x,\xi))=\chi_{ij}^F(x,\xi)
	\end{aligned}\right.,\label{eq:charL1}
	\end{align}
	\begin{align}
	&\left\{\begin{aligned}
		\frac{d\zeta_{ij}}{d\nu}(x,\xi;\nu)&=\epsilon_{ij}\mu_j, \quad \nu\in \left[0,\nu_{ij}^F(x,\xi)\right]\\
		\zeta_{ij}(x,\xi;0)&=\xi,\ \zeta_{ij}(x,\xi;\nu_{ij}^F(x,\xi))=\zeta_{ij}^F(x,\xi)
	\end{aligned}\right.\label{eq:charL2}
\end{align}
where~$\epsilon_{ij}$ is defined by
\begin{align}
	\epsilon_{ij}(x,\xi)&=\begin{cases}
		1&\text{if }i>j\\
		-1&\text{otherwise}
	\end{cases}\label{eq:epsilonij}
\end{align}
These lines all originate at~$(x,\xi)$ and terminate on~$\partial \mathcal{T}$ at the point~$\left(\chi_{ij}^F(x,\xi),\zeta_{ij}^F(x,\xi)\right)$. They are depicted on Figures~\ref{fig:char2}--\ref{fig:char3} in the three distinct cases~$i<j$,~$i=j$ and~$i>j$. The detailed expressions of~$\chi_{ij}(x,\xi;s)$, $\zeta_{ij}(x,\xi;s)$ $\nu_{ij}^F(x,\xi)$,~$\chi_{ij}^F(x,\xi)$ and~$\zeta_{ij}^F(x,\xi)$ are, again, omitted here because of space constraints.
\begin{figure}
\centering
\includegraphics[width=.75\columnwidth]{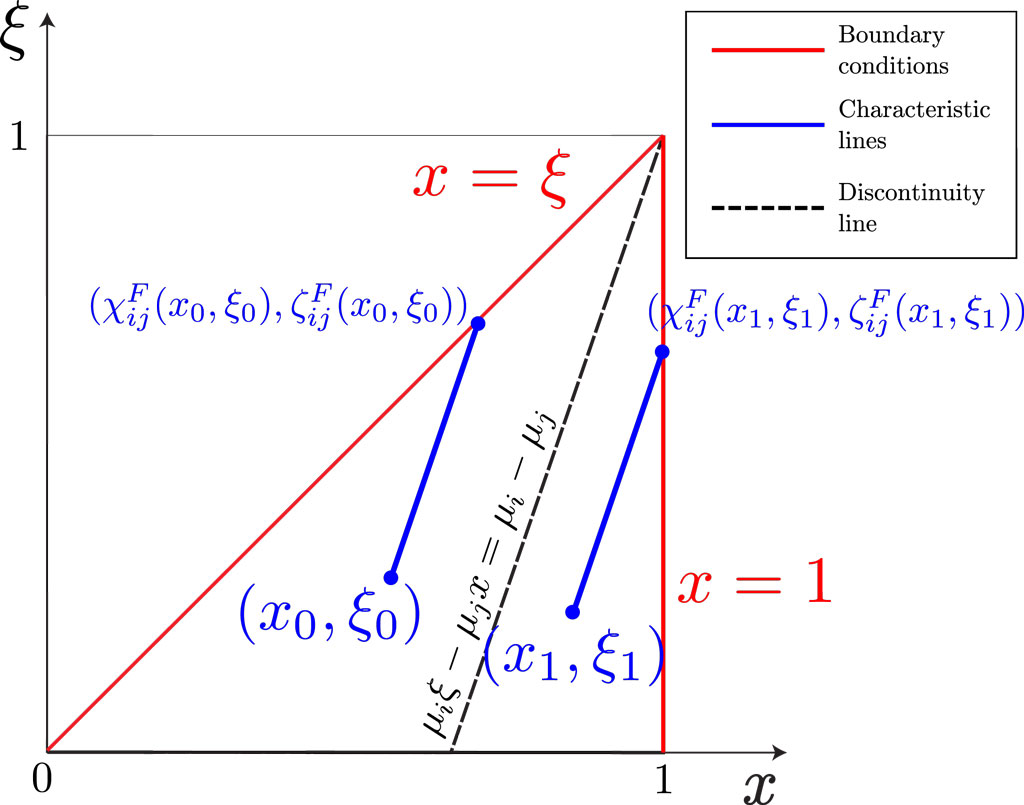}%
\caption{Characteristic lines of the kernels~$L_{ij}$ for~$i>j$}%
\label{fig:char2}%
\end{figure}
Integrating~\eqref{eq:developedKernelL} along these characteristics and plugging in the boundary conditions~\eqref{eq:hypotenuseL},\eqref{eq:modifx0boundary} and \eqref{eq:artificialboundary} yields
\begin{multline}
	L_{ij}\left(x,\xi\right)=\delta_{ij}(x,\xi) l_{ij}+ \left(1-\delta_{ij}(x,\xi)\right) \frac{1}{\mu_j}\sum\limits_{r=1}^n \lambda_r q_{rj}  K_{ir}(\chi_{ij}^F(x,\xi),0)\\
		-\epsilon_{ij}\int_0^{\nu_{ij}^F(x,\xi)} \left[\sum\limits_{p=1}^m\sigma^{--}_{pj}L_{ip}\left(\chi_{ij}(x,\xi;\nu),\zeta_{ij}(x,\xi;\nu)\right)\right.
		\left.+\sum\limits_{k=1}^n \sigma^{+-}_{kj}K_{ik}\left(\chi_{ij}(x,\xi;\nu),\zeta_{ij}(x,\xi;\nu)\right)\right] d\nu
\end{multline}
where the coefficient~$\delta_{ij}(x,\xi)$, defined by
\begin{align}
	\delta_{ij}(x,\xi)&= \begin{cases}
		0 & \text{if $i=j$}\\
		0 & \text{if $i<j$ and $\mu_i \xi-\mu_j x \leq 0$}\\
		1 & \text{otherwise}
	\end{cases},
	\end{align}
reflects the fact that some characteristics terminate on the~$\xi=0$ boundary of~$\mathcal{T}$, while others terminate on the hypotenuse or on the $x=1$ boundary of~$\mathcal{T}$. Plugging in~\eqref{eq:integralK} evaluated at~$(\chi_{ij}^F(x,\xi),0)$ yields
\begin{multline}
	L_{ij}\left(x,\xi\right)=\delta_{ij}(x,\xi) l_{ij}+ \left(1-\delta_{ij}(x,\xi)\right) \frac{1}{\mu_j}\sum\limits_{r=1}^n \lambda_r q_{rj}  k_{ir}\\
	+ \left(1-\delta_{ij}(x,\xi)\right) \frac{1}{\mu_j}\sum\limits_{r=1}^n \lambda_r q_{rj}\int_0^{s_{ir}^F(\chi_{ij}^F(x,\xi),0)} \left[\sum\limits_{k=1}^n\sigma^{++}_{kr}K_{ik}(x_{ir}(\chi_{ij}^F(x,\xi),0;s),\xi_{ir}(\chi_{ij}^F(x,\xi),0;s))\right.\\
	\left.+\sum\limits_{p=1}^m \sigma^{-+}_{pr}L_{ip}(x_{ir}(\chi_{ij}^F(x,\xi),0;s),\xi_{ir}(\chi_{ij}^F(x,\xi),0;s)) \right]ds\\
		-\epsilon_{ij}\int_0^{\nu_{ij}^F(x,\xi)} \left[\sum\limits_{p=1}^m\sigma^{--}_{pj}L_{ip}\left(\chi_{ij}(x,\xi;\nu),\zeta_{ij}(x,\xi;\nu)\right)+\right.
		\left.\sum\limits_{k=1}^n \sigma^{+-}_{kj}K_{ik}\left(\chi_{ij}(x,\xi;\nu),\zeta_{ij}(x,\xi;\nu)\right)\right] d\nu \label{eq:integralL}
\end{multline}
\begin{figure}
\centering
\includegraphics[width=.75\columnwidth]{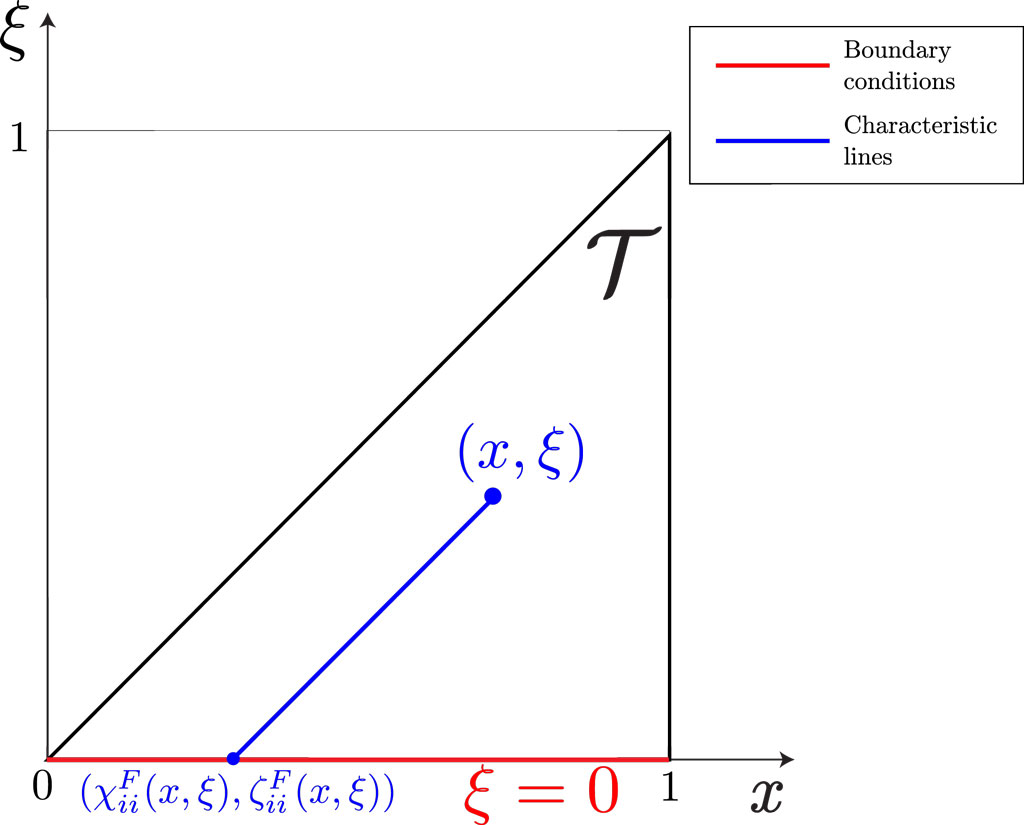}%
\caption{Characteristic lines of the kernels~$L_{ii}$}%
\label{fig:char4}%
\end{figure}
\begin{figure}
\centering
\includegraphics[width=.75\columnwidth]{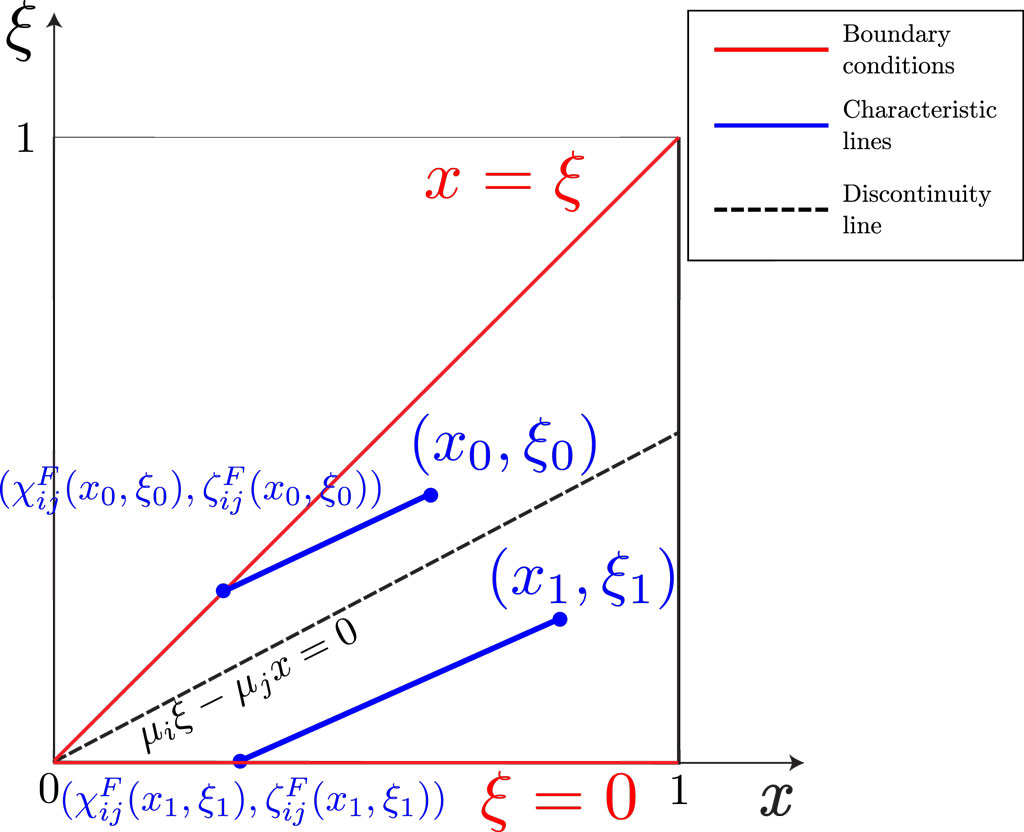}%
\caption{Characteristic lines of the kernels~$L_{ij}$ for~$i<j$}%
\label{fig:char3}%
\end{figure}
\subsection{Method of successive approximations}
We now use the method of successive approximations to solve equations~\eqref{eq:integralK},\eqref{eq:integralL}. Define first
\begin{align}
	 &\forall {1\leq i\leq m, 1\leq j\leq n}, \quad \varphi_{ij}(x,\xi)= k_{ij},\\
\notag&\forall{1\leq i\leq m,1\leq j\leq m},\\
	&\psi_{ij}(x,\xi) = \delta_{ij}(x,\xi) l_{ij}+ \left(1-\delta_{ij}(x,\xi)\right) \frac{1}{\mu_j}\sum\limits_{r=1}^n \lambda_r q_{rj}  k_{ir}
\end{align}
Besides, we define~$\mathbf{H}$ as the vector containing all the kernels, reordered line by line and stacked up, and similarly~$\boldsymbol{\phi}$, as follows
\begin{align}
	%\mathbf{H} &= \left\{K_{ij}\right\}_{1\leq i\leq m, 1\leq j\leq n} \cup \left\{L_{ij}\right\}_{1\leq i\leq m, 1\leq j\leq m},& \boldsymbol{\phi} &= \left\{\varphi_{ij}\right\}_{1\leq i\leq m, 1\leq j\leq n} \cup \left\{\psi_{ij}\right\}_{1\leq i\leq m, 1\leq j\leq m}
	\mathbf{H}&=\begin{pmatrix}
		H_1\\ \vdots\\ H_{nm}  \\ H_{nm+1} \\ \vdots \\H_{nm+m^2}
	\end{pmatrix}= \begin{pmatrix}
		K_{11}\\ \vdots \\  K_{mn} \\ L_{11} 
		\\ \vdots \\L_{mm}
	\end{pmatrix},&\boldsymbol{\phi}&=\begin{pmatrix}
		\phi_1\\ \vdots \\ \phi_{nm}  \\ \phi_{nm+1} \\ \vdots \\\phi_{nm+m^2}
	\end{pmatrix}= \begin{pmatrix}
		\varphi_{11}\\ \vdots \\\varphi_{mn} \\ \psi_{11} 
		\\ \vdots \\\psi_{mm}
	\end{pmatrix}
\end{align}
We consider the following linear operators acting on~$\mathbf{H}$, for~$1\leq i\leq m, 1\leq j\leq n$
\begin{align}
	\Phi_{ij}[\mathbf{H}](x,\xi)= \int_0^{s_{ij}^F(x,\xi)} \left[\sum\limits_{k=1}^n\sigma^{++}_{kj}K_{ik}(x_{ij}(x,\xi;s),\xi_{ij}(x,\xi;s))\right.
	\left.+\sum\limits_{p=1}^m \sigma^{-+}_{pj}L_{ip}(x_{ij}(x,\xi;s),\xi_{ij}(x,\xi;s))\right] ds
\end{align}
and for $1\leq i\leq m, 1\leq j\leq m$
\begin{multline}
	\Psi_{ij}[\mathbf{H}](x,\xi) = 
	\left(1-\delta_{ij}(x,\xi)\right) \frac{1}{\mu_j}\sum\limits_{r=1}^n \lambda_r q_{rj} \int_0^{s_{ir}^F(\chi_{ij}^F(x,\xi),0)} \left[\sum\limits_{k=1}^n\sigma^{++}_{kr}K_{ik}(x_{ir}(\chi_{ij}^F(x,\xi),0;s),\xi_{ir}(\chi_{ij}^F(x,\xi),0;s))\right.\\
	\left.+\sum\limits_{p=1}^m \sigma^{-+}_{pr}L_{ip}(x_{ir}(\chi_{ij}^F(x,\xi),0;s),\xi_{ir}(\chi_{ij}^F(x,\xi),0;s))\right] ds\\
		-\epsilon_{ij}\int_0^{\nu_{ij}^F(x,\xi)} \left[\sum\limits_{p=1}^m\sigma^{--}_{pj}L_{ip}\left(\chi_{ij}(x,\xi;\nu),\zeta_{ij}(x,\xi;\nu)\right)\right.+
		\left.\sum\limits_{k=1}^n \sigma^{+-}_{kj}K_{ik}\left(\chi_{ij}(x,\xi;\nu),\zeta_{ij}(x,\xi;\nu)\right)\right] d\nu.
\end{multline}
Define then the following sequence 
\begin{align}
	\mathbf{H}^0 (x,\xi) &= 0,\\
	\mathbf{H}^q(x,\xi) &= \boldsymbol{\phi}(x,\xi)+\boldsymbol{\Phi}[\mathbf{H}^{q-1}](x,\xi)\\
	&=\begin{pmatrix}
		\varphi_{11}(x,\xi)+\Phi_{11}[\mathbf{H}^{q-1}](x,\xi)\\ \vdots \\\varphi_{1n}(x,\xi) +\Phi_{1n}[\mathbf{H}^{q-1}](x,\xi)  \\ \varphi_{21}(x,\xi)+\Phi_{21}[\mathbf{H}^{q-1}](x,\xi)\\ \vdots \\ \varphi_{mn}(x,\xi)+\Phi_{mn}[\mathbf{H}^{q-1}](x,\xi) \\ \psi_{11} (x,\xi)+\Psi_{11}[\mathbf{H}^{q-1}](x,\xi) 
		\\ \vdots \\\psi_{mm}(x,\xi)+\Psi_{mm}[\mathbf{H}^{q-1}](x,\xi)
	\end{pmatrix}
\end{align}
One should notice that if the limit exists, then $\mathbf{H}=\lim\limits_{q\rightarrow+\infty}\mathbf{H}^q(x,\xi)$ is a solution of the integral equations, and thus solves the original hyperbolic system. Besides, define for $q\geq 1$ the increment $\Delta \mathbf{H}^q=\mathbf{H}^q-\mathbf{H}^{q-1}$, with $\Delta \mathbf{H}^0=\boldsymbol{\phi}$ by definition. Since the functional $\boldsymbol{\Phi}$ is linear, the following equation $\Delta \mathbf{H}^q(x,\xi)=\boldsymbol{\Phi}[\mathbf{H}^{q-1}](x,\xi)$ holds.  Using the definition of $\Delta \mathbf{H}^q$, it follows that if the sum $\sum\limits_{q=0}^{+\infty} \Delta \mathbf{H}^q(x,\xi)$ is finite, then
\begin{align}
	\mathbf{H}(x,\xi)&=\sum\limits_{q=0}^{+\infty} \Delta \mathbf{H}^q(x,\xi) \label{eq:Hsum}
\end{align}
In the next section, we prove convergence of the series in~${L}^\infty$.
\subsection{Convergence of the successive approximation series}
To prove convergence of the series, we look for a recursive upper bound, similarly to, e.g.~\cite{DiMeglio2013}. More precisely, let~$\epsilon$ be such that
\begin{align}
	0<\epsilon<1- \max_{1\leq j<i\leq m}\left\{\frac{\mu_i}{\mu_j} \right\}\label{eq:epsilon}.
\end{align}
Then, the following result holds
\begin{proposition}\label{prop:bounddeltaH}
	For $q\geq 1$, assume that
\begin{align}
	\forall (x,\xi)&\in\mathcal{T}, \; \forall i=1,...,nm+m^2  &\left|\Delta H_{i}(x,\xi) \right|&\leq \bar{\phi}\frac{M^q (x-(1-\epsilon)\xi)^q}{q!}
	\end{align}
	then, it follows that
	\begin{align}
	\forall (x,\xi)&\in\mathcal{T}, \; \forall i=1,...,m, \;\forall j=1,...,n, & \left|{\Phi_{ij}}[\Delta\mathbf{H}](x,\xi)\right|&\leq\bar{\phi}\frac{M^{q+1} (x-(1-\epsilon)\xi)^{q+1}}{(q+1)!}
	\end{align}
	\text{and }
	\begin{align}
	\forall (x,\xi)&\in\mathcal{T}, \; \forall i=1,...,m, \; \forall j=1,...,m, &\left|{\Psi_{ij}}[\Delta\mathbf{H}](x,\xi)\right|&\leq\bar{\phi}\frac{M^{q+1} (x-(1-\epsilon)\xi)^{q+1}}{(q+1)!}
\end{align}
\end{proposition}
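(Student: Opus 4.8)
The plan is to feed the inductive bound on $|\Delta H_i|$ into the integral expressions defining the linear operators $\Phi_{ij}$ and $\Psi_{ij}$ and to estimate the resulting integrals along the characteristic curves \eqref{eq:charK1}--\eqref{eq:charL2}. First I would fix the data constants: let $\bar\sigma$ bound all entries of the $\Sigma^{\pm\pm}$ matrices, $\bar q$ bound all entries of $Q_0$, and recall $\mu_m\le\mu_i\le\mu_1$, $\lambda_1\le\lambda_j\le\lambda_n$; write $W:=x-(1-\epsilon)\xi$ for the quantity appearing in the claimed estimate, and note $W\ge 0$ on $\mathcal{T}$ since $\epsilon<1$.

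The core of the argument --- and what I expect to be the only real obstacle, in the sense of requiring care rather than cleverness --- is a uniform monotonicity property of $W$ along \emph{all} the characteristics in play. Computing the derivative of the weight along each family: along the $K_{ij}$ lines it equals $-\mu_i-(1-\epsilon)\lambda_j<0$; along the $L_{ij}$ lines with $i\le j$ (where $\epsilon_{ij}=-1$) it equals $-\mu_i+(1-\epsilon)\mu_j$, which is negative because $\mu_i\ge\mu_j$ on this range and $\epsilon>0$; and along the $L_{ij}$ lines with $i>j$ (where $\epsilon_{ij}=1$) it equals $\mu_i-(1-\epsilon)\mu_j$, which is negative \emph{exactly} because $\epsilon$ was chosen to satisfy \eqref{eq:epsilon}, i.e.\ $\epsilon<1-\mu_i/\mu_j$. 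Let $c>0$ be the minimum, over the finitely many indices, of the absolute values of these rates. Since moreover the weight is nonnegative at every terminal point (on the hypotenuse it equals $\epsilon$ times the common coordinate, on $\xi=0$ it equals $\chi^F_{ij}\ge0$, on $x=1$ it is at least $\epsilon$), a change of variables from the characteristic parameter to the weight gives, for a characteristic issuing from a point of weight $w_0$ and running to its terminal parameter $s^F$,
\begin{align}
\int_0^{s^{F}} \frac{M^q\,w(s)^q}{q!}\,ds \;\le\; \frac{1}{c}\int_0^{w_0}\frac{M^q w^q}{q!}\,dw \;=\; \frac{1}{c}\,\frac{M^q w_0^{\,q+1}}{(q+1)!}.
\end{align}

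Given this estimate, the bound on $\Phi_{ij}$ is immediate: its integrand is a sum of at most $n+m$ kernel values, each at most $\bar\phi\,M^qw(s)^q/q!$ with $w(s)\le w_0=W$, hence $|\Phi_{ij}[\Delta\mathbf{H}](x,\xi)|\le (n+m)\bar\sigma\,\bar\phi\,c^{-1}M^qW^{q+1}/(q+1)!$. For $\Psi_{ij}$ there are two groups of terms. The $-\epsilon_{ij}\int_0^{\nu^F_{ij}}$ group is estimated verbatim as $\Phi_{ij}$, since its characteristic also starts at $(x,\xi)$ with weight $W$. The first group is present only when $\delta_{ij}=0$, i.e.\ when the $L$-characteristic reaches $\xi=0$; it is an integral of kernel values along a $K_{ir}$ line emanating from $\big(\chi^F_{ij}(x,\xi),0\big)$, whose starting weight is $\chi^F_{ij}(x,\xi)$, and the second key inequality is $\chi^F_{ij}(x,\xi)\le W$ in all such cases: for $i=j$ one has $\chi^F_{ii}=x-\xi\le x-(1-\epsilon)\xi$, and for $i<j$ with $\mu_i\xi\le\mu_jx$ one has $\chi^F_{ij}=x-(\mu_i/\mu_j)\xi\le x-(1-\epsilon)\xi$ because $\mu_i/\mu_j>1>1-\epsilon$. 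This group is therefore at most a fixed constant (absorbing $\mu_j^{-1}\sum_r\lambda_rq_{rj}\le n\lambda_n\bar q/\mu_m$) times $\bar\phi\,M^qW^{q+1}/(q+1)!$. In both cases the prefactor is a fixed constant independent of $q$, $M$, $x$, $\xi$; it is at this point that $M$ is pinned down, by requiring it to be at least as large as this constant (and large enough that $\bar\phi\ge\sup_i|\phi_i|$ accounts for the data terms at $q=1$). With such $M$, both estimates become $\bar\phi\,M^{q+1}W^{q+1}/(q+1)!$, which is the assertion. The bulk of the work is thus the bookkeeping over the several characteristic configurations and the verification of the two inequalities --- monotonicity of $W$ and $\chi^F_{ij}\le W$ --- with \eqref{eq:epsilon} recognized as precisely the hypothesis that rescues the $i>j$ branch.
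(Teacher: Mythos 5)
Your proposal is correct and follows essentially the same route as the paper. The weight $W = x - (1-\epsilon)\xi$, its strict monotonicity along all the characteristic families (with the choice~\eqref{eq:epsilon} being exactly what rescues the $L_{ij}$, $i>j$ branch), the resulting change-of-variables estimate $\int_0^{s^F} M^q w(s)^q/q!\,ds \le c^{-1} M^q W^{q+1}/(q+1)!$, and the auxiliary inequality $\chi^F_{ij}(x,\xi)\le W$ for feeding the $K$-line at $(\chi^F_{ij},0)$ into the recursion are precisely the contents of the paper's preparatory Lemma and of inequality~\eqref{eq:chizeta}; you have simply inlined them into the proof instead of isolating them as a lemma, and verified $\chi^F_{ij}\le W$ by direct case inspection rather than as a corollary of monotonicity (which are the same observation). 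The only cosmetic difference is that the paper fixes $M$ in closed form as $M=(n\bar\lambda\underline\lambda\bar q+1)(n+m)\bar\sigma M_\lambda$ rather than choosing it ``large enough,'' and the base of the induction is $q=0$ with $\Delta\mathbf{H}^0=\boldsymbol\phi$ (your ``data terms at $q=1$'' should read $q=0$), but these do not affect the argument.
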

The proof of this proposition relies on the following Lemma, which is crucial and different with previous works. 
\begin{Lemma}
	For $q\in \mathbb{N}$, $(x,\xi)\in\mathcal{T}$, and $s_{ij}^F(x,\xi)$, $\nu_{ij}^F(x,\xi)$, $x_{ij}(x,\xi,\cdot)$, $\xi_{ij}(x,\xi,\cdot)$, $\chi_{ij}(x,\xi,\cdot)$, $\zeta_{ij}(x,\xi,\cdot)$ defined as in~\eqref{eq:charK1},\eqref{eq:charK2},\eqref{eq:charL1},\eqref{eq:charL2}, respectively, the following inequalities holds
	
$\forall 1\leq i\leq m,$ $\forall 1\leq j\leq n$
\begin{align}
\int_0^{s_{ij}^F(x,\xi)} \left(x_{ij}(x,\xi;s)-(1-\epsilon)\xi_{ij}(x,\xi;s)\right)^qds\leq M_\lambda \frac{\left(x-(1-\epsilon)\xi\right)^{q+1}}{q+1}\label{eq:ineqxxi}
\end{align}
$\forall {1\leq i,j\leq m}$
\begin{align}
\int_0^{\nu_{ij}^F(x,\xi)} \left(\chi_{ij}(x,\xi;\nu)-(1-\epsilon)\zeta_{ij}(x,\xi;\nu)\right)^qd\nu
\leq M_\lambda \frac{\left(x-(1-\epsilon)\xi\right)^{q+1}}{q+1}\label{eq:ineqchizeta}
\end{align}
where
\begin{align}
	M_\lambda = 
	\max\limits_{i,p=1,...,m,j=1,...,n} \left\{\frac{1}{\mu_i+(1-\epsilon)\lambda_{j}} , \frac{1}{-\epsilon_{ij}\left(\mu_i-(1-\epsilon)\mu_{p}\right)}\right\}
\end{align}
\end{Lemma}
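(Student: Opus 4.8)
The plan is to exploit the fact that the characteristic ODEs \eqref{eq:charK1}--\eqref{eq:charK2} and \eqref{eq:charL1}--\eqref{eq:charL2} have \emph{constant} coefficients, so that the characteristics are affine in the running parameter. Concretely, $x_{ij}(x,\xi;s)=x-\mu_i s$ and $\xi_{ij}(x,\xi;s)=\xi+\lambda_j s$, while $\chi_{ij}(x,\xi;\nu)=x+\epsilon_{ij}\mu_i\nu$ and $\zeta_{ij}(x,\xi;\nu)=\xi+\epsilon_{ij}\mu_j\nu$. Substituting into the two integrands yields
\[
x_{ij}(x,\xi;s)-(1-\epsilon)\xi_{ij}(x,\xi;s)=\big(x-(1-\epsilon)\xi\big)-\big(\mu_i+(1-\epsilon)\lambda_j\big)s,
\]
\[
\chi_{ij}(x,\xi;\nu)-(1-\epsilon)\zeta_{ij}(x,\xi;\nu)=\big(x-(1-\epsilon)\xi\big)+\epsilon_{ij}\big(\mu_i-(1-\epsilon)\mu_j\big)\nu .
\]
In both cases the integrand has the form $\big(A-Bt\big)^q$ with $A=x-(1-\epsilon)\xi$ and a constant $B$ which I claim is strictly positive.

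For the $K$ kernels this is immediate, since $B=\mu_i+(1-\epsilon)\lambda_j>0$. For the $L$ kernels $B=-\epsilon_{ij}(\mu_i-(1-\epsilon)\mu_j)$, and here is exactly where the restriction \eqref{eq:epsilon} on $\epsilon$ enters. If $i>j$ then $\epsilon_{ij}=1$ and, since \eqref{eq:epsilon} gives $1-\epsilon>\mu_i/\mu_j$, one has $\mu_i-(1-\epsilon)\mu_j<0$, hence $B>0$; if $i=j$ then $B=\epsilon\mu_i>0$; and if $i<j$ then $\epsilon_{ij}=-1$ and the ordering \eqref{eq:speeds} gives $\mu_i>\mu_j>(1-\epsilon)\mu_j$, so $\mu_i-(1-\epsilon)\mu_j>0$ and again $B>0$. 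In every case $1/B\le M_\lambda$ by the definition of $M_\lambda$.

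Next I would check that the integrand stays nonnegative along the whole integration interval, which removes any parity issue in $q$. Since $\xi\le x$ and $1-\epsilon>0$, we have $A\ge\epsilon x\ge 0$, and because $B>0$ the map $t\mapsto A-Bt$ is decreasing; hence it suffices to check nonnegativity at the terminal point. For the $K$ characteristics that point lies on the hypotenuse $\{\chi=\zeta\}$, where the value equals $\epsilon x_{ij}^F\ge 0$. For the $L$ characteristics the terminal point lies on $\partial\mathcal{T}$, i.e. on one of the three edges $\{\zeta=0\}$, $\{\chi=1\}$ or $\{\chi=\zeta\}$: on the first the value is $\chi_{ij}^F\ge 0$, on the second it is $1-(1-\epsilon)\zeta_{ij}^F\ge\epsilon>0$, and on the hypotenuse it is $\epsilon\chi_{ij}^F\ge 0$. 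Thus $A-Bt\ge 0$ throughout.

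Finally, I would integrate explicitly. From $\int_0^{T}(A-Bt)^q\,dt=\dfrac{A^{q+1}-(A-BT)^{q+1}}{B(q+1)}$ together with $0\le A-BT\le A$ we get
\[
\int_0^{T}(A-Bt)^q\,dt\le\frac{A^{q+1}}{B(q+1)}\le M_\lambda\,\frac{\big(x-(1-\epsilon)\xi\big)^{q+1}}{q+1},
\]
which is \eqref{eq:ineqxxi} upon taking $T=s_{ij}^F(x,\xi)$ and \eqref{eq:ineqchizeta} upon taking $T=\nu_{ij}^F(x,\xi)$. The only genuinely delicate point is the sign of $\epsilon_{ij}\big(\mu_i-(1-\epsilon)\mu_j\big)$ for the ``backward'' homodirectional kernels $L_{ij}$ with $i>j$: this is precisely the term that would ruin a naive estimate, and it is made negative only by the particular choice \eqref{eq:epsilon} of $\epsilon$; everything else in the argument is elementary.
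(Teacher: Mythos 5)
Your proof is correct and follows essentially the same approach as the paper's: both reduce the integral to the explicit antiderivative of an affine power $(A-Bt)^q$ (the paper via the substitution $\tau = x_{ij}-(1-\epsilon)\xi_{ij}$, you by writing the characteristics out directly), verify $B>0$ by the same case analysis on $\epsilon_{ij}$ and the choice of $\epsilon$ in \eqref{eq:epsilon}, and drop the nonnegative terminal term to obtain the bound with $1/B\le M_\lambda$. Your additional check that the integrand stays nonnegative over the whole interval is slightly more than needed (only the terminal value matters, and only when $q+1$ is odd), but it is harmless and makes the argument self-contained.
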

\begin{proof}
	Consider the following change of variables, noting \eqref{eq:charK1},\eqref{eq:charK2},
	\begin{align}
			\tau&=x_{ij}(x,\xi;s)-(1-\epsilon)\xi_{ij}(x,\xi;s),\\
			d\tau &= \left[\frac{dx_{ij}}{ds}(x,\xi;s)-(1-\epsilon)\frac{d\xi_{ij}}{ds}(x,\xi;s)\right]ds\\
			&=\left(-\mu_ i - (1-\epsilon)\lambda_j\right) ds
	\end{align}
	The left-hand-side of~\eqref{eq:ineqxxi} becomes
\begin{align}
	\int_0^{s_{ij}^F(x,\xi)} \left(x_{ij}(x,\xi;s)-(1-\epsilon)\xi_{ij}(x,\xi;s)\right)^qds
	&=\int_{x-(1-\epsilon)\xi}^{x_{ij}^F(x,\xi)-(1-\epsilon)\xi^F_{ij}(x,\xi)} \frac{-\tau^q}{\mu_ i + (1-\epsilon)\lambda_j}d\tau\\
	\\
	&=	\frac{\left(x-(1-\epsilon)\xi\right)^{q+1}-\left(x_{ij}^F(x,\xi)-(1-\epsilon)\xi^F_{ij}(x,\xi)\right)^{q+1}}{(\mu_ i + (1-\epsilon)\lambda_j)(q+1)} \\
	&\leq  M_\lambda \frac{\left(x-(1-\epsilon)\xi\right)^{q+1}}{q+1}
\end{align}
where we have used the fact that for all~$1\leq i \leq m$, $1 \leq j \leq n$, one has
\begin{align}
	x_{ij}^F(x,\xi)-(1-\epsilon)\xi^F_{ij}(x,\xi)\geq 0
\end{align}
which is trivially satisfied since~$(x_{ij}^F(x,\xi),\xi^F_{ij}(x,\xi)) \in \partial \mathcal{T}$ and~$\epsilon>0$. Consider now the following change of variables
\begin{align}
	\tau &= \chi_{ij}(x,\xi;s)-(1-\epsilon)\zeta_{ij}(x,\xi;s),\\
	d\tau &= \left[\frac{d\chi_{ij}}{ds}(x,\xi;s)-(1-\epsilon)\frac{d\zeta_{ij}}{ds}(x,\xi;s)\right]ds\\
			&=\epsilon_{ij}\left(\mu_ i - (1-\epsilon)\mu_j\right) ds 
\end{align}
Thus, the left-hand-side of~\eqref{eq:ineqchizeta} becomes
\begin{align}
	\int_0^{\nu_{ij}^F(x,\xi)} \left(\chi_{ij}(x,\xi;\nu)-(1-\epsilon)\zeta_{ij}(x,\xi;\nu)\right)^qd\nu
	&=\int_{x-(1-\epsilon)\xi}^{\chi_{ij}^F(x,\xi)-(1-\epsilon)\zeta^F_{ij}(x,\xi)} \frac{\tau^q}{\epsilon_{ij}\left(\mu_ i-(1-\epsilon)\mu_j\right)}d\tau\\
	&=	\frac{\left(x-(1-\epsilon)\xi\right)^{q+1}-\left(\chi_{ij}^F(x,\xi)-(1-\epsilon)\zeta^F_{ij}(x,\xi)\right)^{q+1}}{-\epsilon_{ij}\left( \mu_ i - (1-\epsilon)\mu_j\right)(q+1)}\label{eq:ineqintchizeta}
\end{align}
Given the definition of~$\epsilon_{ij}$ given by~\eqref{eq:epsilonij}, one has
\begin{align}
	-\epsilon_{ij}\left( \mu_ i - (1-\epsilon)\mu_j\right) = \begin{cases}
			\mu_ i - (1-\epsilon)\mu_j & \text{if } i\leq j\\
			(1-\epsilon)\mu_j - \mu_ i & \text{if } i>j
	\end{cases}
\end{align}
Therefore, given the definition of~$\epsilon$ (Equation~\eqref{eq:epsilon}) in the case~$i> j$ and the ordering of the~$\mu_i$ in the case~$i\leq j$, one has
\begin{align}
	-\epsilon_{ij}\left( \mu_ i - (1-\epsilon)\mu_j\right)>0 \label{eq:ineqepsilon}
\end{align}
Besides, since~$(\chi_{ij}^F(x,\xi),\zeta_{ij}^F(x,\xi))\in \mathcal{T}$, one has~$\left(\chi_{ij}^F(x,\xi)-(1-\epsilon)\zeta^F_{ij}(x,\xi)\right)>0$ and~\eqref{eq:ineqintchizeta} becomes
\begin{align}
	\int_0^{\nu_{ij}^F(x,\xi)} \left(\chi_{ij}(x,\xi;\nu)-(1-\epsilon)\zeta_{ij}(x,\xi;\nu)\right)^qd\nu \leq  M_\lambda \frac{\left(x-(1-\epsilon)\xi\right)^{q+1}}{q+1}
\end{align}
which concludes the proof. 
\end{proof}
\begin{rmk}
	Notice that~\eqref{eq:ineqepsilon} also implies that, for any~$(x,\xi)\in \mathcal{T}$ and~$1\leq i \leq m$, $1 \leq j \leq n$ the function
	\begin{align}
		\nu \in [0,\nu^F_{ij}(x,\xi)] \mapsto \chi_{ij}(x,\xi;\nu)-(1-\epsilon)\zeta_{ij}(x,\xi;\nu)
	\end{align}
	is strictly decreasing, in particular the following inequality holds
	\begin{align}
		0\leq \chi^F_{ij}(x,\xi)-(1-\epsilon)\zeta^F_{ij}(x,\xi) \leq x-(1-\epsilon)\xi \label{eq:chizeta}
	\end{align}
	which will be useful in the proof of Proposition~\ref{prop:bounddeltaH}.
\end{rmk}
\begin{proof}[Proof of Proposition~\ref{prop:bounddeltaH}]
	Define
	\begin{align}
			\bar{\lambda} &= \max \left\{\lambda_n,\mu_1\right\},\quad \underline{\lambda} = \max\left\{\frac{1}{\lambda_1},\frac{1}{\mu_n}\right\},\\
			\bar{\sigma} &= \max\limits_{i,j} \left\{\sigma^{++},\sigma^{-+},\sigma^{+-},\sigma^{--}\right\},\quad \bar{q}= \max\limits_{i,j} \{q_{ij}\} \\
			M&=\left(n \bar{\lambda}\underline{\lambda}\bar{q}+1\right)(n+m)\bar{\sigma}M_\lambda,\\
			\bar{\phi}&=\max\limits_{i,j}\max\limits_{(x,\xi)\in\mathcal{T}} \left\{|\varphi_{i,j}(x,\xi)|,\ |\psi_{i,j}(x,\xi)|\right\}
	\end{align}
	Let now~$q\in\mathbb{N}$ and assume that 
	\begin{align}
	\forall &(x,\xi)\in\mathcal{T}, \; \forall i=1,...,nm+m^2 & \left|\Delta H_{i}(x,\xi) \right|&\leq \bar{\phi}\frac{M^q (x-(1-\xi))^q}{q!}\label{eq:upperboundDeltaH}	
	\end{align}
	Then, for~$1\leq i\leq m$, $1 \leq j \leq n$, $(x,\xi)\in \mathcal{T}$ one has
	\begin{multline}
			\left|{\Phi_{ij}}[\Delta\mathbf{H}](x,\xi)\right|\leq \int_0^{s_{ij}^F(x,\xi)} \left|\sum\limits_{k=1}^n\sigma^{++}_{kj}\Delta K_{ik}(x_{ij}(x,\xi;s),\xi_{ij}(x,\xi;s))\right.\\\left.+\sum\limits_{p=1}^m \sigma^{-+}_{pj}\Delta L_{ip}(x_{ij}(x,\xi;s),\xi_{ij}(x,\xi;s))\right| ds
			\end{multline}
			using~\eqref{eq:ineqxxi} and~\eqref{eq:upperboundDeltaH}, this yields
			\begin{align}
	\left|{\Phi_{ij}}[\Delta\mathbf{H}](x,\xi)\right|&\leq \ (n+m)\bar{\sigma}
	\cdot\int_0^{s_{ij}^F(x,\xi)} \bar{\phi}\frac{M^q\left(x_{ij}(x,\xi;s)-(1-\epsilon)\xi_{ij}(x,\xi;s)\right)^q}{q!} ds\\
	&\leq \ (n+m)\bar{\sigma}\frac{\bar{\phi}M^q}{q!}M_\lambda \frac{(x-(1-\xi))^{q+1}}{q+1}\\
	&\leq \ \bar{\phi} \frac{M^{q+1} (x-(1-\epsilon)\xi)^{q+1}}{(q+1)!}
	\end{align}
	Similarly, for~$1 \leq i,j \leq m$, one gets, using% that
	%\begin{multline}
		%\left|{\Psi_{ij}}[\Delta\mathbf{H}](x,\xi)\right|\leq \\
	%\left(1-\delta_{ij}(x,\xi)\right) \frac{1}{\mu_j}\sum\limits_{r=1}^n \lambda_r q_{rj} \int_0^{s_{ir}^F(\chi_{ij}^F(x,\xi),0)} \\
	%\left|\sum\limits_{k=1}^n\sigma^{++}_{kr}K_{ik}(x_{ir}(\chi_{ij}^F(x,\xi),0;s),\xi_{ir}(\chi_{ij}^F(x,\xi),0;s))\right.\\
	%\left.+\sum\limits_{p=1}^m \sigma^{-+}_{pr}L_{ip}(x_{ir}(\chi_{ij}^F(x,\xi),0;s),\xi_{ir}(\chi_{ij}^F(x,\xi),0;s))\right| ds\\
		%+\int_0^{\nu_{ij}^F(x,\xi)} \left|\sum\limits_{p=1}^m\sigma^{--}_{pj}L_{ip}\left(\chi_{ij}(x,\xi;\nu),\zeta_{ij}(x,\xi;\nu)\right)\right.\\
		%\left.+\sum\limits_{k=1}^n \sigma^{+-}_{kj}K_{ik}\left(\chi_{ij}(x,\xi;\nu),\zeta_{ij}(x,\xi;\nu)\right)\right| d\nu 
	%\end{multline}
	%Using
	~\eqref{eq:upperboundDeltaH}% yields
	\begin{multline}
		\left|{\Psi_{ij}}[\Delta\mathbf{H}](x,\xi)\right|\leq 		\bar{\lambda}\underline{\lambda}\bar{q}(n+m)\bar{\sigma}\sum\limits_{r=1}^n \int_0^{s_{ir}^F(\chi_{ij}^F(x,\xi),0)} 
		\bar{\phi}\frac{M^q\left(x_{ir}(\chi_{ij}^F(x,\xi),0;s)-(1-\epsilon)\xi_{ir}(\chi_{ij}^F(x,\xi),0;s)\right)^q}{q!}ds\\
		+(n+m)\bar{\sigma}\int_0^{\nu_{ij}^F(x,\xi)}
		\bar{\phi}\frac{M^q\left(\chi_{ij}(x,\xi;\nu)-(1-\epsilon)\zeta_{ij}(x,\xi;\nu)\right)^q}{q!}d\nu
	\end{multline}
	Then, using~\eqref{eq:ineqxxi} at~$(x,\xi)=(\chi_{ij}^F(x,\xi),0)$ and~\eqref{eq:ineqchizeta} yields
	\begin{multline}
\left|{\Psi_{ij}}[\Delta\mathbf{H}](x,\xi)\right|
		\leq  \bar{\lambda}\underline{\lambda}\bar{q}(n+m)\bar{\sigma}n\bar{\phi}M_\lambda M^q\frac{\left(\chi_{ij}^F(x,\xi)-(1-\epsilon)\zeta_{ij}^F(x,\xi)\right)^{q+1}}{(q+1)!}\\
		+(n+m)\bar{\sigma} \bar{\phi}\frac{M^qM_\lambda (x-(1-\epsilon)\xi)^{q+1}}{(q+1)!}	
		\end{multline}
		{Inequality~\eqref{eq:chizeta} yields}
		\begin{align}
		\left|{\Psi_{ij}}[\Delta\mathbf{H}](x,\xi)\right|&\leq \left(n \bar{\lambda}\underline{\lambda}\bar{q}+1\right)(n+m)\bar{\sigma} \bar{\phi}M_\lambda \frac{M^q(x-(1-\epsilon)\xi)^{q+1}}{(q+1)!}\\
		&\leq \bar{\phi}\frac{M^{q+1}(x-(1-\epsilon)\xi)^{q+1}}{(q+1)!}
	\end{align}
	which concludes the proof.
\end{proof}
Proposition~\ref{prop:bounddeltaH} directly leads to Theorem \ref{the:wellposednessKandL}, since by the same procedures presented in \cite{Coron2013} and \cite{DiMeglio2013}, one has that \eqref{eq:Hsum} converges and 
\begin{align}
\left|\mathbf{H}(x,\xi)\right|&=\left|\sum\limits_{q=0}^{+\infty} \Delta \mathbf{H}^q(x,\xi)\right|\leq \bar{\phi}\text{e}^{M(x-(1-\epsilon)\xi)}.\label{eq:kernelbound}
\end{align}

\section{Concluding remarks}\label{sec:outlook}
We have presented boundary control designs for a general class of linear first-order hyperbolic systems: an output-feedback law for stabilization of heterodirectional systems and a tracking controller for motion planning for homodirectional systems. 

These results bridge the gap with the results of, e.g.~\cite{Li2010}, where the null (or weak) controllability of~$(n+m)$--state heterodirectional states is proved but no explicit design is given. 

Our results open the door for a large number of related problems to be solved, e.g. collocated observer design, disturbance rejection, similarly to~\cite{Aamo2012}, parameter identification as in~\cite{DiMeglio2014}, output-feedback adaptive control as in~\cite{berkrs2014}, and stabilization of quasilinear systems as in~\cite{Coron2013}. 

Another important question concerns the degree of freedom given by Equation~\eqref{eq:artificialboundary} in the control design. The effect of the boundary value of the kernels on the transient performances of the closed-loop system is non-trivial, yet crucical for applications.

\section*{Acknowledgements}
The authors would like to thank Jean-Michel Coron for his encouragement and fruitful discussions.

\bibliographystyle{plain}

\begin{thebibliography}{10}

\bibitem{Aamo2012}
O.M.~Aamo.
\newblock Disturbance rejection in 2x2 linear hyperbolic systems.
\newblock {\em Automatic Control, IEEE Transactions on}, PP(99):1, 2012.

\bibitem{Amin2008}
S.~Amin, F. Hante, and A.~Bayen.
\newblock {\em Hybrid Systems: Comp}, chapter On stability of switched linear
  hyperbolic conservation laws with reflecting boundaries, pages 602--605.
\newblock Springer-Verlag, 2008.

\bibitem{berkrs2014}
P. Bernard and M. Krstic. \newblock Adaptive output-feedback stabilization of non-local hyperbolic PDEs. \newblock{Automatica}, vol. 50, pp. 2692--2699, 2014.

\bibitem{Coron2007}
J.-M. Coron.
\newblock {\em Control and Nonlinearity}.
\newblock American Mathematical Society, 2007.

\bibitem{Coron2008}
J.-M. Coron, G.~Bastin, and B.~{d'Andr{\'e}a-Novel}.
\newblock {Dissipative boundary conditions for one-dimensional nonlinear
  hyperbolic systems}.
\newblock {\em SIAM Journal on Control and Optimization}, 47(3):1460--1498,
  2008.

\bibitem{Coron1999}
J.-M. Coron, B.~{d'Andr{\'e}a-Novel}, and G.~Bastin.
\newblock A lyapunov approach to control irrigation canals modeled by
  saint-venant equations.
\newblock {\em Proceedings of the 1999 European Control Conference, Karlsruhe,
  Germany}, 1999.

\bibitem{Coron2013}
J.-M. Coron, R.~Vazquez, M.~Krstic, and G.~Bastin.
\newblock Local exponential $h^2$ stabilization of a $2\times 2$ quasilinear
  hyperbolic system using backstepping.
\newblock {\em SIAM Journal on Control and Optimization}, 51(3):2005--2035,
  2013.

\bibitem{Coron2014}
Jean-Michel Coron and Georges Bastin.
\newblock Dissipative boundary conditions for one-dimensional quasi-linear
  hyperbolic systems: Lyapunov stability for the {$C^1$}-norm.
\newblock {\em Preprint}, 2014.

\bibitem{Halleux2003}
J.~de~Halleux, C.~Prieur, J.-M. Coron, B.~{d'Andr{\'e}a-Novel}, and G.~Bastin.
\newblock Boundary feedback control in networks of open channels.
\newblock {\em Automatica}, 39(8):1365 -- 1376, 2003.

\bibitem{DiMeglio2011}
F.~Di~Meglio.
\newblock {\em Dynamics and control of slugging in oil production}.
\newblock PhD thesis, MINES ParisTech, 2011.

\bibitem{DiMeglio2014}
F.~Di~Meglio, D.~Bresch-Pietri, and U.~J.~F. Aarsnes.
\newblock An adaptive observer for hyperbolic systems with application to
  underbalanced drilling.
\newblock In {\em Proceeding of the 2014 IFAC World Congress}, pages
  11391--11397, 2014.

\bibitem{DiMeglio2013}
F.~Di~Meglio, R.~Vazquez, and M.~Krstic.
\newblock Stabilization of a system of $n+1$ coupled first-order hyperbolic
  linear pdes with a single boundary input.
\newblock {\em Automatic Control, IEEE Transactions on}, 58(12):3097--3111,
  2013.

\bibitem{Diagne2012}
A.~Diagne, G.~Bastin, and J.-M. Coron.
\newblock Lyapunov exponential stability of 1-d linear hyperbolic systems of
  balance laws.
\newblock {\em Automatica}, 48(1):109 -- 114, 2012.

\bibitem{Djordjevic2010}
S.~Djordjevic, O.H. Bosgra, P.M.J. Van~den Hof, and D.~Jeltsema.
\newblock Boundary actuation structure of linearized two-phase flow.
\newblock In {\em American Control Conference (ACC), 2010}, pages 3759 --3764,
  30 2010-july 2 2010.

\bibitem{Dudret2012}
S.~Dudret, K.~Beauchard, F.~Ammouri, and Rouchon P.
\newblock Stability and asymptotic observers of binary distillation processes
  described by nonlinear convection/diffusion models.
\newblock {\em Proceedings of the 2012 American Control Conference, Montr\'eal,
  Canada}, pages 3352--3358, 2012.

\bibitem{Greenberg1984}
J.~M. Greenberg and Ta~Tsien Li.
\newblock The effect of boundary damping for the quasilinear wave equation.
\newblock {\em J. Differential Equations}, 52(1):66--75, 1984.

\bibitem{Gugat2011}
Martin Gugat, Markus Dick, and G{\"u}nter Leugering.
\newblock Gas flow in fan-shaped networks: classical solutions and feedback
  stabilization.
\newblock {\em SIAM J. Control Optim.}, 49(5):2101--2117, 2011.

\bibitem{Gugat2011a}
M. Gugat and M. Herty.
\newblock Existence of classical solutions and feedback stabilization for the
  flow in gas networks.
\newblock {\em ESAIM Control Optim. Calc. Var.}, 17(1):28--51, 2011.

\bibitem{Hochstadt1973}
H. Hochstadt.
\newblock Integral Equations.
\newblock {\em Wiley-Interscience, New York}, 1973

\bibitem{Hu2015}
L.~Hu and F.~Di~Meglio.
\newblock Finite-time backstepping stabilization of $3 \times 3$ hyperbolic
  systems.
\newblock {\em IEEE European Control Conference}, under review, 2015.

\bibitem{Li1994}
Ta~Tsien Li.
\newblock {\em Global classical solutions for quasilinear hyperbolic systems},
  volume~32 of {\em RAM: Research in Applied Mathematics}.
\newblock Masson, Paris; John Wiley \& Sons, Ltd., Chichester, 1994.

\bibitem{Li2010}
Tatsien Li and Bopeng Rao.
\newblock Strong (weak) exact controllability and strong (weak) exact
  observability for quasilinear hyperbolic systems.
\newblock {\em Chinese Annals of Mathematics, Series B}, 31(5):723--742, 2010.

\bibitem{Qin1985}
Tie~Hu Qin.
\newblock Global smooth solutions of dissipative boundary value problems for
  first order quasilinear hyperbolic systems.
\newblock {\em Chinese Ann. Math. Ser. B}, 6(3):289--298, 1985.
\newblock A Chinese summary appears in Chinese Ann. Math. Ser. A {{\bf{6}}}
  (1985), no. 4, 514.

\bibitem{VazKrs2010}
R. Vazquez and M. Krstic.
\newblock Motion planning for homodirectional $2\times2$ hyperbolic linear PDEs.
Presented at \newblock {\em Symposium on Analysis and Control of Infinite-Dimensional Systems in the Engineering Sciences}, Max Planck Institute for Dynamics of Complex Technical Systems, Magdeburg, Germany, 2010.

\bibitem{Vazquez2014}
R. Vazquez and M. Krstic.
\newblock Marcum {Q}-functions and explicit kernels for stabilization of linear hyperbolic systems with constant coefficients.
\newblock {\em Systems \& Control Letters }, 68:33--42, 2014

\bibitem{Xu2002}
C.-Z. Xu and G.~Sallet.
\newblock Exponential stability and transfer functions of processes governed by
  symmetric hyperbolic systems.
\newblock {\em ESAIM: Control, Optimisation and Calculus of Variations},
  7:421--442, 2002.

\end{thebibliography}

%\appendix
\renewcommand\thesection{\Alph{section}}
\renewcommand\theequation{A.\arabic{equation}}
\setcounter{equation}{0}
\setcounter{section}{0}

\end{document}